\newtheorem{theorem}{Theorem}[section]
\newtheorem{lemma}[theorem]{Lemma}
\newtheorem{question}[theorem]{Question}
\newenvironment{proof of claim}{\noindent\textbf{Proof of the claim.}}{\hfill{$\square$}\newline}
\theoremstyle{definition}
\newtheorem{definition}[theorem]{Definition}
\newtheorem{notation}[theorem]{Notation}
\newtheorem{example}[theorem]{Example}
\theoremstyle{remark}
\newtheorem{remark}[theorem]{Remark}
\numberwithin{equation}{section}
\begin{document}

\title[A Gelfand--Naimark type theorem]{A Gelfand--Naimark type theorem}

\author[M. Farhadi and M.R. Koushesh]{M. Farhadi and M.R. Koushesh$^*$}

\address{\textbf{[First author]} Department of Mathematical Sciences, Isfahan University of Technology, Isfahan 84156--83111, Iran.}

\email{m.farhadi@math.iut.ac.ir}

\address{\textbf{[Second author]} Department of Mathematical Sciences, Isfahan University of Technology, Isfahan 84156--83111, Iran and School of Mathematics, Institute for Research in Fundamental Sciences (IPM), P.O. Box: 19395--5746, Tehran, Iran.}

\email{koushesh@cc.iut.ac.ir}

\thanks{$^*$Corresponding author}

\thanks{The research of the second author was in part supported by a grant from IPM (No. 94030418).}

\subjclass[2010]{Primary 46J10; Secondary 54D35, 54C35, 46J25.}


\keywords{Stone--\v{C}ech compactification; Gelfand--Naimark theorem; Connectedness; Spectrum.}

\begin{abstract}
Let $X$ be a completely regular space. For a non-vanishing self-adjoint Banach subalgebra $H$ of $C_B(X)$ which has local units we construct the spectrum $\mathfrak{sp}(H)$ of $H$ as an open subspace of the Stone--\v{C}ech compactification of $X$ which contains $X$ as a dense subspace. The construction of $\mathfrak{sp}(H)$ is simple. This enables us to study certain properties of $\mathfrak{sp}(H)$, among them are various compactness and connectedness properties. In particular, we find necessary and sufficient conditions in terms of either $H$ or $X$ under which $\mathfrak{sp}(H)$ is connected, locally connected and pseudocompact, strongly zero-dimensional, basically disconnected, extremally disconnected, or an $F$-space.
\end{abstract}

\maketitle


\section{Introduction}

Throughout this article by a \textit{space} we mean a \textit{topological space}. We adopt the definitions of \cite{E}, in particular, completely regular spaces as well as compact spaces (and therefore locally compact spaces) are assumed to be Hausdorff. The field of scalars is assumed to be the complex field $\mathbb{C}$, though all results hold true (with the same proof) in the real setting.

Let $X$ be a space. We denote by $C(X)$ the set of all continuous scalar valued mappings on $X$ and we denote by $C_B(X)$ the set of all bounded elements of $C(X)$. The set $C_B(X)$ is a Banach algebra with pointwise addition and multiplication and supremum norm. For any $f$ in $C(X)$, the \textit{zero-set} of $f$ is defined to be $f^{-1}(0)$ and is denote by $\mathrm{z}(f)$, the \textit{cozero-set} of $f$ is defined to be $X\setminus\mathrm{z}(f)$ and is denote by $\mathrm{coz}(f)$, and the \textit{support} of $f$ is defined to be $\mathrm{cl}_X\mathrm{coz}(f)$ and is denoted by $\mathrm{supp}(f)$. The set of all zero-sets of $X$ is denote by $\mathrm{z}(X)$ and the set of all cozero-sets of $X$ is denote by $\mathrm{coz}(X)$. We denote by $C_0(X)$ the set of all $f$ in $C(X)$ which vanish at infinity (that is, $|f|^{-1}([\epsilon,\infty))$ is compact for any positive $\epsilon$). Also, we denote by $C_{00}(X)$ the set of all $f$ in $C(X)$ whose support is compact.

In \cite{Kou2} the second author has obtained a commutative Gelfand--Naimark type theorem which shows that for a locally separable metrizable space $X$ the set $C_s(X)$ of all continuous bounded complex valued mappings whose support is separable, where $C_s(X)$ is provided with the supremum norm, is a Banach algebra which is isometrically isomorphic to $C_0(Y)$ for some locally compact space $Y$. The space $Y$ (which is unique up to homeomorphism) has been constructed explicitly as a subspace of the Stone--\v{C}ech compactification of $X$. Furthermore, the space $Y$ is countably compact, and if $X$ is non-separable, is also non-normal. In addition $C_0(Y)=C_{00}(Y)$. The theorems in \cite{Kou2} have motivated a series of subsequent results. (See \cite{Kou3}, \cite{Kou4} and \cite{Kou6}. See also \cite{AG} for a relevant result.) This in particular includes our result in this paper in which, when $X$ is a completely regular space, for a non-vanishing self-adjoint Banach subalgebra of $C_B(X)$ which has local units we find a locally compact space $Y$ such that $H$ is isometrically isomorphic to $C_0(Y)$. This result also follows from the celebrated commutative Gelfand--Naimark theorem. Here, we construct $Y$ explicitly as an open subspace of the Stone--\v{C}ech compactification of $X$. The space $Y$ contains $X$ as a dense subspace. Furthermore, it is unique up to homeomorphism and therefore coincides with the spectrum of $H$. The simple construction of $Y$ enables us to study some of its properties, among them are various compactness and connectedness properties. In particular, we find necessary and sufficient conditions in terms of either $H$ or $X$ under which $\mathfrak{sp}(H)$ is connected, locally connected and pseudocompact, strongly zero-dimensional, basically disconnected, extremally disconnected, or an $F$-space.

Throughout this article we will make critical use of the theory of the Stone--\v{C}ech compactification. We review some of the basic properties of the Stone--\v{C}ech compactification in the following and refer the reader to the texts \cite{E} and \cite{GJ} for further possible reading.

\subsection*{The Stone--\v{C}ech compactification}

Let $X$ be a completely regular space. A \textit{compactification} of $X$ is a compact space which contains $X$ as a dense subspace. The \textit{Stone--\v{C}ech compactification} of $X$, denoted by $\beta X$, is the (unique) compactification of $X$ which is characterized among all compactifications of $X$ by the property that every mapping in $C_B(X)$ is extendable to a (unique) mapping in $C(\beta X)$. For a mapping $f$ in $C_B(X)$ we denote this continuous extension by $f_\beta$. We make use of the following properties.
\begin{itemize}
\item The space $X$ is locally compact if and only if $X$ is open in $\beta X$.
\item For any $X\subseteq‎ T\subseteq\beta ‎X$ we have $\beta‎ T=\beta ‎X$.
\item For any open subspace $V$ of $\beta X$ we have
\[\mathrm{cl}_{\beta X} V=\mathrm{cl}_{\beta X} (V\cap X).\]
\item The closure in $\beta X$ of every open and closed subspace of $X$ is open and closed in $\beta X$.
\item For any two open and closed subspaces $U$ and $V$ of $X$ we have
\[\mathrm{cl}_{\beta X}(U\cap V)=\mathrm{cl}_{\beta X}U\cap\mathrm{cl}_{\beta X}V.\]
In particular, disjoint open and closed subspaces of $X$ have disjoint closures in $\beta X$.
\end{itemize}

\section{The Representation Theorem}

In this section we prove our representation theorem. This, for a completely regular space $X$, provides a way to represent a non-vanishing self-adjoint closed subalgebra of $C_B(X)$ which has local units as $C_0(Y)$ for some (unique) locally compact space $Y$.

We now proceed with a few definitions and lemmas.

For a completely regular space $X$ and a subset $H$ of $C_B(X)$ the following subspace $\lambda_HX$ of $\beta X$ has been defined in \cite{Kou3} (originally in \cite{Kou1} and \cite{Kou5}) and plays a crucial role in our study.

\begin{definition}
Let $X$ be a completely regular space. For a subset $H$ of $C_B(X)$ let
\[\lambda_HX=\bigcup_{f\in H}\mathrm{int}_{\beta X}\mathrm{cl}_{\beta X}\mathrm{coz}(f).\]
\end{definition}

Observe that $\lambda_HX$ is open in (the compact space) $\beta X$ and is therefore locally compact.

Recall that for a space $X$, a subset $H$ of $C_B(X)$ is said to
\begin{itemize}
\item be \textit{self-adjoint} if $H$ contains the complex conjugate $\overline{f}$ of any element $f$ in $H$ (where $\overline{f}(x)=\overline{f(x)}$ for any $x$ in $X$).
\item be \textit{non-vanishing} if for any $x$ in $X$ there is some $f$ in $H$ such that $f(x)\neq 0$.
\item \textit{separate points of $X$} if for any distinct elements $x$ and $y$ in $X$ there is some $f$ in $H$ such that $f(x)\neq f(y)$.
\item \textit{have local units} if for any closed subspace $A$ in $X$ and any neighborhood $U$ of $A$ in $X$ contained in $\mathrm{supp}(h)$ for some $h$ in $H$, there is some $f$ in $H$ such that $f\mid_A=\mathbf{1}$ and $f\mid_{X\setminus U}=\mathbf{0}$.
\end{itemize}

\begin{lemma}\label{RSH}
Let $X$ be a completely regular space and let $H$ be a subset of $C_B(X)$. Let $f$ be in $H$ and let $f_\beta:\beta X\rightarrow\mathbb{C}$ be the continuous extension of $f$. Then
\[\mathrm{coz}(f_\beta)\subseteq\lambda_HX.\]
\end{lemma}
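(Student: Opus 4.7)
The plan is to prove the stronger statement that $\mathrm{coz}(f_\beta)\subseteq\mathrm{int}_{\beta X}\mathrm{cl}_{\beta X}\mathrm{coz}(f)$, from which the lemma follows immediately since the right-hand side is one of the open sets appearing in the union defining $\lambda_HX$ (namely the one corresponding to $g=f\in H$).

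First I would fix an arbitrary point $t\in\mathrm{coz}(f_\beta)$ and, using the continuity of $f_\beta$ at $t$, produce an open neighborhood $V$ of $t$ in $\beta X$ such that $f_\beta$ does not vanish on $V$. The goal then reduces to showing $V\subseteq\mathrm{cl}_{\beta X}\mathrm{coz}(f)$, because this would place $t$ in the interior of $\mathrm{cl}_{\beta X}\mathrm{coz}(f)$.

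The key point is to pass from the extension $f_\beta$ back to the original $f$ on $X$. Since $V\cap X$ is a subset of $X$ on which $f_\beta$ agrees with $f$ and does not vanish, we have $V\cap X\subseteq\mathrm{coz}(f)$. Now I would invoke the third listed property of the Stone--\v{C}ech compactification, namely that $\mathrm{cl}_{\beta X}V=\mathrm{cl}_{\beta X}(V\cap X)$ for any open $V\subseteq\beta X$. This gives
\[
V\;\subseteq\;\mathrm{cl}_{\beta X}V\;=\;\mathrm{cl}_{\beta X}(V\cap X)\;\subseteq\;\mathrm{cl}_{\beta X}\mathrm{coz}(f),
\]
so $t\in V\subseteq\mathrm{int}_{\beta X}\mathrm{cl}_{\beta X}\mathrm{coz}(f)\subseteq\lambda_HX$, completing the argument.

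There is no real obstacle here; the entire proof rests on the density of $X$ in $\beta X$ in the guise of the identity $\mathrm{cl}_{\beta X}V=\mathrm{cl}_{\beta X}(V\cap X)$ for open $V$. I would not need any hypothesis on $H$ (self-adjointness, non-vanishing, local units) for this lemma, which is consistent with the statement asking only that $H$ be a subset of $C_B(X)$.
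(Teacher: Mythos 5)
Your proof is correct and is essentially the paper's own argument: both rest on the identity $\mathrm{cl}_{\beta X}V=\mathrm{cl}_{\beta X}(V\cap X)$ for open $V$ together with $f_\beta\mid_X=f$, the only cosmetic difference being that the paper applies this directly to $V=\mathrm{coz}(f_\beta)$ rather than to a pointwise-chosen neighborhood. Your closing observation that no hypotheses on $H$ are needed is also consistent with how the paper states and uses the lemma.
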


\begin{proof}
Note that $\mathrm{coz}(f_\beta)\subseteq\mathrm{cl}_{\beta X}\mathrm{coz}(f_\beta)$ and $\mathrm{cl}_{\beta X}\mathrm{coz}(f_\beta)=\mathrm{cl}_{\beta X}(X\cap\mathrm{coz}(f_\beta))$, as $\mathrm{coz}(f_\beta)$ is open in $\beta X$ (and $X$ is dense in $\beta X$). Also, $X\cap\mathrm{coz}(f_\beta)=\mathrm{coz}(f)$, as $f_\beta$ extends $f$. Thus $\mathrm{coz}(f_\beta)\subseteq\mathrm{int}_{\beta X}\mathrm{cl}_{\beta X}\mathrm{coz}(f)$. But $\mathrm{int}_{\beta X}\mathrm{cl}_{\beta X}\mathrm{coz}(f)\subseteq\lambda_HX$ by the definition of $\lambda_HX$.
\end{proof}

\begin{lemma}\label{TFRS}
Let $X$ be a completely regular space and let $H$ be a non-vanishing subset of $C_B(X)$. Then
\[X\subseteq\lambda_HX.\]
\end{lemma}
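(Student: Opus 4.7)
The statement follows almost immediately from the previous lemma, so my plan is simply to reduce it to Lemma \ref{RSH}. Let me outline the steps.

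First I would fix an arbitrary point $x \in X$ and invoke the non-vanishing hypothesis to produce some $f \in H$ with $f(x) \neq 0$. The point is now to exhibit $x$ as an element of some basic piece of $\lambda_H X$, namely $\mathrm{int}_{\beta X}\mathrm{cl}_{\beta X}\mathrm{coz}(f)$. Rather than work directly with that interior, I would pass to the continuous extension $f_\beta : \beta X \to \mathbb{C}$: since $f_\beta$ agrees with $f$ on $X$, we have $f_\beta(x) = f(x) \neq 0$, so $x \in \mathrm{coz}(f_\beta)$.

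At this point Lemma \ref{RSH} applies and tells us $\mathrm{coz}(f_\beta) \subseteq \lambda_H X$, whence $x \in \lambda_H X$. Since $x$ was arbitrary, $X \subseteq \lambda_H X$.

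There is no real obstacle here; the only content is that the extension $f_\beta$ does not vanish at points where $f$ does not vanish (a triviality from the extension property of $\beta X$), together with the already-established Lemma \ref{RSH}. One could alternatively give a self-contained argument by choosing an open set $U \subseteq \beta X$ with $U \cap X = \mathrm{coz}(f)$ and using the identity $\mathrm{cl}_{\beta X} U = \mathrm{cl}_{\beta X}(U \cap X)$ listed in the preliminaries to conclude $U \subseteq \mathrm{cl}_{\beta X}\mathrm{coz}(f)$ and hence $U \subseteq \mathrm{int}_{\beta X}\mathrm{cl}_{\beta X}\mathrm{coz}(f)$, but routing through Lemma \ref{RSH} is cleaner.
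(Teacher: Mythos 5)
Your argument is correct and is essentially identical to the paper's own proof: both fix $x\in X$, use the non-vanishing hypothesis to get $f\in H$ with $f(x)\neq 0$, observe that $x\in\mathrm{coz}(f_\beta)$ since $f_\beta$ extends $f$, and conclude via Lemma \ref{RSH}. No issues.
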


\begin{proof}
Let $x$ be in $X$. There is some $f$ in $H$ such that $f(x)\neq 0$. By Lemma \ref{RSH} we have $\mathrm{coz}(f_\beta)\subseteq\lambda_HX$. But then $x$ is in $\mathrm{coz}(f_\beta)$, as $f_\beta$ extends $f$.
\end{proof}

\begin{definition}
Let $X$ be a completely regular space and let $H$ be a non-vanishing subset of $C_B(X)$. For any $f$ in $H$ denote
\[f_H=f_\beta\mid_{\lambda_HX}.\]
\end{definition}

Note that $X\subseteq\lambda_HX$ by Lemma \ref{TFRS}, thus, $f_H$ extends $f$.

\begin{lemma}\label{UU}
Let $X$ be a completely regular space and let $H$ be a non-vanishing subset of $C_B(X)$. Then, $f_H$ is in $C_0(\lambda_HX)$ for any $f$ in $H$.
\end{lemma}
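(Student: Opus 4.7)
The plan is to verify directly, from the definition given in the introduction, that for each $\epsilon>0$ the superlevel set $|f_H|^{-1}([\epsilon,\infty))$ is compact in $\lambda_H X$; continuity of $f_H$ is automatic, being a restriction of the continuous extension $f_\beta$ on $\beta X$.

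Fix $\epsilon>0$ and consider
\[
A_\epsilon=|f_\beta|^{-1}\bigl([\epsilon,\infty)\bigr)\subseteq\beta X.
\]
Since $f_\beta$ is continuous on the compact space $\beta X$, the set $A_\epsilon$ is closed in $\beta X$ and therefore compact. The key observation is that $A_\epsilon\subseteq\mathrm{coz}(f_\beta)$, because any point $y$ with $|f_\beta(y)|\geq\epsilon>0$ satisfies $f_\beta(y)\neq 0$. Applying Lemma \ref{RSH} we obtain
\[
A_\epsilon\subseteq\mathrm{coz}(f_\beta)\subseteq\lambda_HX,
\]
so that $A_\epsilon$ sits inside $\lambda_HX$ and coincides with $|f_H|^{-1}([\epsilon,\infty))$. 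Being a compact subset of $\beta X$ contained in $\lambda_HX$, it is compact as a subspace of $\lambda_HX$ as well, which is exactly the vanishing-at-infinity condition required for $f_H\in C_0(\lambda_HX)$.

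There is essentially no obstacle here; the only thing to notice is that while $\lambda_HX$ is merely open (not closed) in $\beta X$, so that arbitrary closed subsets of $\beta X$ need not be compact after intersecting with $\lambda_HX$, Lemma \ref{RSH} forces each strict superlevel set of $f_\beta$ to lie entirely inside $\lambda_HX$. This is exactly the leverage one needs to convert a closed-in-$\beta X$ statement into a compact-in-$\lambda_HX$ statement, and it is the whole content of the lemma.
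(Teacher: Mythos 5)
Your proof is correct and follows the paper's argument exactly: both show that $|f_\beta|^{-1}([\epsilon,\infty))$ is closed (hence compact) in $\beta X$ and lies inside $\mathrm{coz}(f_\beta)\subseteq\lambda_HX$ by Lemma \ref{RSH}, so it equals $|f_H|^{-1}([\epsilon,\infty))$ and is compact in $\lambda_HX$.
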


\begin{proof}
Let $f$ be in $H$. Let $\epsilon>0$. Then $|f_\beta|^{-1}([\epsilon,\infty))\subseteq\lambda_HX$, as $|f_\beta|^{-1}([\epsilon,\infty))\subseteq\mathrm{coz}(f_\beta)$ and $\mathrm{coz}(f_\beta)\subseteq\lambda_HX$ by Lemma \ref{RSH}. Thus
\[|f_H|^{-1}([\epsilon,\infty))=\lambda_HX\cap|f_\beta|^{-1}([\epsilon,\infty))=|f_\beta|^{-1}([\epsilon,\infty))\]
is closed in $\beta X$ and is therefore compact.
\end{proof}

\begin{lemma}\label{ID}
Let $X$ be a completely regular space and let $H$ be a self-adjoint subalgebra of $C_B(X)$. Let $A$ be a compact subspace of $\lambda_HX$. Then
\[A\subseteq\mathrm{cl}_{\beta X}\mathrm{coz}(f)\]
for some $f$ in $H$.
\end{lemma}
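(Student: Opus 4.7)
The plan is to use compactness of $A$ together with the self-adjoint algebra structure of $H$ to pass from a finite cover of $A$ to a single witnessing element.

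First I would unfold the definition of $\lambda_H X$. The family
\[\{\mathrm{int}_{\beta X}\mathrm{cl}_{\beta X}\mathrm{coz}(g):g\in H\}\]
is an open cover of $\lambda_H X$ in $\beta X$, hence (since $A\subseteq\lambda_H X$) it is an open cover of $A$. Because $A$ is compact, I can extract finitely many $g_1,\ldots,g_n\in H$ with
\[A\subseteq\bigcup_{i=1}^n\mathrm{int}_{\beta X}\mathrm{cl}_{\beta X}\mathrm{coz}(g_i).\]

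Next I would amalgamate the $g_i$'s into a single element of $H$. The natural choice is
\[f=\sum_{i=1}^n g_i\bar{g_i}.\]
Since $H$ is self-adjoint each $\bar{g_i}\in H$, and since $H$ is a subalgebra $f\in H$. The point of this construction is that $f(x)=0$ iff every $g_i(x)=0$, so $\mathrm{z}(f)=\bigcap_{i=1}^n\mathrm{z}(g_i)$ and therefore
\[\mathrm{coz}(f)=\bigcup_{i=1}^n\mathrm{coz}(g_i).\]

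Finally I would finish with a straightforward chain of inclusions: for each $i$, $\mathrm{coz}(g_i)\subseteq\mathrm{coz}(f)$, hence $\mathrm{cl}_{\beta X}\mathrm{coz}(g_i)\subseteq\mathrm{cl}_{\beta X}\mathrm{coz}(f)$, and in particular $\mathrm{int}_{\beta X}\mathrm{cl}_{\beta X}\mathrm{coz}(g_i)\subseteq\mathrm{cl}_{\beta X}\mathrm{coz}(f)$. Taking the union over $i$ gives $A\subseteq\mathrm{cl}_{\beta X}\mathrm{coz}(f)$, as required.

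There is no real obstacle here; the only subtlety is making sure the single $f$ one produces from the finitely many $g_i$ actually lies in $H$, which is exactly what the self-adjoint subalgebra hypothesis guarantees. (Real non-negative combinations like $\sum|g_i|^2$ are the standard trick for collapsing a finite family of elements into one whose cozero-set is the union.)
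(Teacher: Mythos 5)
Your proof is correct and follows essentially the same route as the paper's: extract a finite subcover by compactness, then collapse the finitely many $g_i$ into the single element $\sum_{i} g_i\bar{g_i}$ of $H$, whose cozero-set is the union of the $\mathrm{coz}(g_i)$. The only cosmetic difference is that the paper invokes the identity $\mathrm{cl}_{\beta X}(\bigcup_i\mathrm{coz}(g_i))=\bigcup_i\mathrm{cl}_{\beta X}\mathrm{coz}(g_i)$ for finite unions, whereas you argue via the inclusions $\mathrm{coz}(g_i)\subseteq\mathrm{coz}(f)$ directly; both are immediate.
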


\begin{proof}
Suppose that $A$ is compact. Using the definition of $\lambda_HX$, we have
\[A\subseteq\mathrm{int}_{\beta X}\mathrm{cl}_{\beta X}\mathrm{coz}(h_1)\cup\cdots\cup\mathrm{int}_{\beta X}\mathrm{cl}_{\beta X}\mathrm{coz}(h_n)\]
for some $h_1,\ldots,h_n$ in $H$. In particular,
\begin{eqnarray*}
A&\subseteq&\mathrm{cl}_{\beta X}\mathrm{coz}(h_1)\cup\cdots\cup\mathrm{cl}_{\beta X}\mathrm{coz}(h_n)\\
&=&\mathrm{cl}_{\beta X}(\mathrm{coz}(h_1)\cup\cdots\cup\mathrm{coz}(h_n))=\mathrm{cl}_{\beta X}\mathrm{coz}(h)
\end{eqnarray*}
where
\[h=|h_1|^2+\cdots+|h_n|^2=h_1\overline{h_1}+\cdots+h_n\overline{h_n}.\]
Then $h$ is in $H$, as $H$ is self-adjoint.
\end{proof}

The following lemma is a corollary of the Stone--Weierstrass theorem (see Theorem 8.1 and Corollary 8.3 of \cite{7}, Chapter V) and will be used in the proof of our next theorem.

\begin{lemma}\label{YYF}
Let $X$ be a locally compact space and let $H$ be a non-vanishing self-adjoint closed subalgebra of $C_0(X)$ which separates points of $X$. Then
\[H=C_0(X).\]
\end{lemma}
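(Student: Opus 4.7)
The plan is to reduce to the classical Stone--Weierstrass theorem on a compact Hausdorff space by passing to the one-point compactification $X^+=X\cup\{\infty\}$ of $X$. Each $g\in C_0(X)$ extends uniquely to a continuous function $g^+\in C(X^+)$ with $g^+(\infty)=0$, so $C_0(X)$ sits inside $C(X^+)$ as the maximal ideal of functions vanishing at $\infty$.

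First I would form the subalgebra $H^+=H+\mathbb{C}\cdot\mathbf{1}$ of $C(X^+)$, where the functions in $H$ are regarded as vanishing at $\infty$. This $H^+$ is manifestly a self-adjoint subalgebra containing the constants. The key verification is that $H^+$ separates the points of $X^+$: two distinct points of $X$ are already separated by $H$ (by hypothesis); and any $x\in X$ is separated from $\infty$ by choosing, via the non-vanishing assumption, some $f\in H$ with $f(x)\neq 0$, so that $f(\infty)=0\neq f(x)$.

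Next I would invoke the compact Stone--Weierstrass theorem (the cited Theorem 8.1 and Corollary~8.3 in Chapter V of \cite{7}) to conclude that $H^+$ is uniformly dense in $C(X^+)$. Given any $g\in C_0(X)$, approximate its extension $g^+$ uniformly on $X^+$ by a sequence $h_n+c_n$ with $h_n\in H$ and $c_n\in\mathbb{C}$. Evaluating at $\infty$ gives $c_n\to g^+(\infty)=0$, so $h_n\to g^+$ uniformly on $X^+$, and in particular $\|h_n-g\|_\infty\to 0$ on $X$. Since $H$ is closed in $C_0(X)$, this forces $g\in H$, yielding $H=C_0(X)$.

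The argument is essentially routine; the only point requiring any care is exploiting the non-vanishing hypothesis (rather than the a priori weaker assumption that $H$ separates points) to separate points of $X$ from the adjoined point at infinity, since without this $H^+$ could fail to separate points and Stone--Weierstrass would not apply.
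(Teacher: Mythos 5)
Your proof is correct. The paper gives no argument here at all --- it simply cites the locally compact Stone--Weierstrass theorem (Corollary 8.3 in Chapter V of \cite{7}) --- and your one-point-compactification argument, including the correct use of the non-vanishing hypothesis to separate points of $X$ from $\infty$ and of the closedness of $H$ to pass from density to equality, is precisely the standard proof of that cited corollary.
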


The following version of the Banach--Stone theorem (see Theorem 7.1 of \cite{Be}) will be used in the proof of our theorem. It states that for a locally compact $X$ the topology of $X$ determines and is determined by the algebraic structure of $C_0(X)$. (It turns out that for a locally compact space $X$ even the ring structure of $C_0(X)$ suffices to determine the topology $X$; see \cite{AAN}.)

\begin{lemma}\label{IJUG}
Let $S$ and $T$ be locally compact spaces. Then the normed algebras $C_0(S)$ and $C_0(T)$ are isometrically isomorphic if and only if the spaces $S$ and $T$ are homeomorphic.
\end{lemma}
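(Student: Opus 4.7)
The plan is to realize each locally compact space as the Gelfand spectrum of its $C_0$ algebra and then transfer the algebra isomorphism to a homeomorphism of spectra. The ``if'' direction is routine: a homeomorphism $\phi\colon T\to S$ induces the pullback $\Phi\colon C_0(S)\to C_0(T)$, $\Phi(f)=f\circ\phi$, which is plainly an isometric algebra isomorphism.

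For the nontrivial direction, write $\mathrm{Sp}(A)$ for the set of nonzero multiplicative linear functionals (characters) on a commutative Banach algebra $A$, equipped with the weak-$*$ topology. For any locally compact space $X$ I would show that the evaluation map $\epsilon\colon X\to\mathrm{Sp}(C_0(X))$, defined by $\epsilon(x)(f)=f(x)$, is a homeomorphism. Injectivity follows from Urysohn separation in locally compact Hausdorff spaces, and continuity of $\epsilon$ is immediate from the definition of the weak-$*$ topology.

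The main obstacle is surjectivity, i.e.\ showing every character of $C_0(X)$ is an evaluation at a point; the non-unital nature of $C_0(X)$ is what makes this step delicate. I would handle it by unitization: adjoin a unit to form $A=C_0(X)\oplus\mathbb{C}$, which is isometrically $*$-isomorphic to $C(X^+)$, where $X^+$ is the one-point compactification of $X$. A character $\chi$ on $C_0(X)$ extends uniquely to a character $\tilde\chi$ on $A$ by $\tilde\chi(f,\alpha)=\chi(f)+\alpha$, and the classical commutative Gelfand--Naimark theorem for the unital $C^*$-algebra $C(X^+)$ identifies $\tilde\chi$ with evaluation at some $y\in X^+$. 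Since $\chi\not\equiv 0$ one checks $y\neq\infty$, and hence $\chi=\epsilon(y)$. Openness of $\epsilon$ (equivalently, continuity of $\epsilon^{-1}$) then follows because any basic weak-$*$ neighborhood of $\epsilon(x)$ of the form $\{\psi:|\psi(f_i)-\epsilon(x)(f_i)|<\varepsilon\}$ pulls back under $\epsilon$ to a neighborhood of $x$ in $X$.

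With the homeomorphisms $\epsilon_S\colon S\to\mathrm{Sp}(C_0(S))$ and $\epsilon_T\colon T\to\mathrm{Sp}(C_0(T))$ in hand, the conclusion is purely formal. An isometric algebra isomorphism $\Phi\colon C_0(S)\to C_0(T)$ induces the adjoint $\Phi^*\colon\mathrm{Sp}(C_0(T))\to\mathrm{Sp}(C_0(S))$, $\Phi^*(\chi)=\chi\circ\Phi$, which is a bijection (with inverse induced by $\Phi^{-1}$) and a homeomorphism in the weak-$*$ topologies by an elementary check. Composing, $\epsilon_S^{-1}\circ\Phi^*\circ\epsilon_T\colon T\to S$ is the desired homeomorphism.
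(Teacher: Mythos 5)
The paper does not prove this lemma at all: it is quoted as a known version of the Banach--Stone theorem (Theorem 7.1 of \cite{Be}). Your Gelfand-spectrum proof is the standard route to it and is essentially sound; in fact it establishes slightly more than the lemma asks, since the isometry hypothesis is never used --- only the algebra isomorphism matters --- which is consistent with the paper's own remark that even the ring structure of $C_0(X)$ determines $X$. The unitization step, the extension $\tilde\chi(f,\alpha)=\chi(f)+\alpha$, the identification of characters of $C(X^+)$ with point evaluations, and the exclusion of $y=\infty$ are all correct.

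The one genuine flaw is in the openness step. What you write --- that a basic weak-$*$ neighborhood of $\epsilon(x)$ pulls back under $\epsilon$ to a neighborhood of $x$ --- is precisely the statement that $\epsilon$ is \emph{continuous} at $x$, which you had already established; it says nothing about $\epsilon$ being open. To show $\epsilon^{-1}$ is continuous you must go the other way: given an open set $U\ni x$ in $X$, exhibit a weak-$*$ open set $V\ni\epsilon(x)$ in $\mathrm{Sp}(C_0(X))$ with $V\subseteq\epsilon(U)$. The standard fix is to choose, by Urysohn in the locally compact Hausdorff space $X$, a function $f\in C_{00}(X)\subseteq C_0(X)$ with $f(x)=1$ and $f=0$ off $U$, and take $V=\{\chi:|\chi(f)|>1/2\}$; by the surjectivity of $\epsilon$ already proved, every $\chi\in V$ equals $\epsilon(y)$ for some $y$ with $|f(y)|>1/2$, hence $y\in U$. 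With that repair the argument is complete and correctly replaces the paper's external citation by a self-contained proof.
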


We are now at a point to prove our main result.

\begin{theorem}\label{FS}
Let $X$ be a completely regular space. Let $H$ be a non-vanishing self-adjoint closed subalgebra of $C_B(X)$ which has local units. Then $H$ is isometrically isomorphic to $C_0(Y)$ for some unique $($up to homeomorphism$)$ locally compact space $Y$, namely, for $Y=\lambda_HX$. Moreover, the following are equivalent:
\begin{itemize}
\item[(1)] $H$ is unital.
\item[(2)] $H$ contains $\mathbf{1}$.
\item[(3)] $Y$ is compact.
\item[(4)] $Y=\beta X$.
\end{itemize}
\end{theorem}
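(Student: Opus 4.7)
The plan is to define the obvious candidate map $\Phi\colon H\to C_0(Y)$, with $Y:=\lambda_H X$, by $\Phi(f)=f_H$, and to prove that it is a surjective isometric $*$-isomorphism. Lemma \ref{UU} places the image inside $C_0(Y)$; $\Phi$ is a $*$-algebra homomorphism because $\beta$-extension already is one on $C_B(X)$ and restriction preserves all operations. The isometry property comes from the chain $X\subseteq Y\subseteq\beta X$ (Lemma \ref{TFRS}), which forces $\|f\|_\infty\le\|f_H\|_\infty\le\|f_\beta\|_\infty=\|f\|_\infty$. Since $H$ is closed and $\Phi$ is isometric, $\Phi(H)$ is a closed self-adjoint subalgebra of $C_0(Y)$, so Lemma \ref{YYF} will yield $\Phi(H)=C_0(Y)$ as soon as we verify that $\Phi(H)$ is non-vanishing on $Y$ and separates the points of $Y$.

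This verification is the main technical step, and it is exactly where the local units hypothesis, together with normality of the compact Hausdorff space $\beta X$, enters. Given $y\in Y$, fix $h\in H$ with $y\in V:=\mathrm{int}_{\beta X}\mathrm{cl}_{\beta X}\mathrm{coz}(h)$ and, using normality of $\beta X$, pick open sets $W_1,W_2$ with $y\in W_1\subseteq\mathrm{cl}_{\beta X}W_1\subseteq W_2\subseteq\mathrm{cl}_{\beta X}W_2\subseteq V$. Set $A=\mathrm{cl}_{\beta X}W_1\cap X$ and $U=W_2\cap X$. Then $A$ is closed in $X$, $U$ is an open neighborhood of $A$ in $X$, and $U\subseteq\mathrm{cl}_{\beta X}\mathrm{coz}(h)\cap X=\mathrm{supp}(h)$; local units yields $f\in H$ with $f\mid_A\equiv 1$ and $f\mid_{X\setminus U}\equiv 0$. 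The identity $\mathrm{cl}_{\beta X}(W_1\cap X)=\mathrm{cl}_{\beta X}W_1$ from the preamble places $y$ in $\mathrm{cl}_{\beta X}A$, so continuity gives $f_\beta(y)=1$, hence non-vanishing. Point separation at $y_1\ne y_2$ in $Y$ is the same construction run inside one half of a pair of disjoint $\beta X$-open neighborhoods of $y_1$ and $y_2$: the other half meets $X\setminus U$ densely, forcing $f_\beta(y_1)=1$ and $f_\beta(y_2)=0$. Lemma \ref{YYF} now gives $\Phi(H)=C_0(Y)$.

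Uniqueness of $Y$ up to homeomorphism is exactly Lemma \ref{IJUG}. For the four-way equivalence: (2)$\Rightarrow$(1) is trivial, and for (1)$\Rightarrow$(2) a unit $e\in H$ must satisfy $e(x)=1$ at every $x\in X$ (pick $f\in H$ with $f(x)\ne 0$ by non-vanishing and read off $ef=f$), so $e=\mathbf{1}$. The equivalence (2)$\Leftrightarrow$(3) is because $\Phi(\mathbf{1})=\mathbf{1}$ lies in $C_0(Y)$ precisely when $Y$ is compact. Finally (3)$\Rightarrow$(4) uses $X\subseteq Y\subseteq\beta X$ with $Y$ compact, via the property $\beta T=\beta X$ for any intermediate $T$, to give $\beta X=\beta Y=Y$, and (4)$\Rightarrow$(3) is immediate. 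The principal obstacle throughout is the combined non-vanishing/separation argument of the second paragraph, which is where all three hypotheses on $H$ (non-vanishing, self-adjoint, and local units) must be orchestrated together with the structure of $\beta X$.
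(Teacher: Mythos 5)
Your proposal is correct and follows essentially the same route as the paper: the same map $f\mapsto f_H$, Lemma \ref{UU} for well-definedness, the Stone--Weierstrass corollary (Lemma \ref{YYF}) for surjectivity after producing local units on sets of the form $\mathrm{cl}_{\beta X}W_1\cap X\subseteq W_2\cap X$ pulled back from $\beta X$, and the Banach--Stone theorem (Lemma \ref{IJUG}) for uniqueness. The only cosmetic difference is that you invoke the definition of $\lambda_HX$ directly to find a single $h$ with $y\in\mathrm{int}_{\beta X}\mathrm{cl}_{\beta X}\mathrm{coz}(h)$, where the paper routes through the compactness argument of Lemma \ref{ID}; both work.
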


\begin{proof}
Let
\[\psi:H\longrightarrow C_0(\lambda_HX)\]
be defined by $\psi(f)=f_H$ for any $f$ in $H$. The definition makes sense by Lemma \ref{UU}. We show that $\psi$ is an isometric isomorphism. It is clear that $\psi$ is injective and that $\psi$ is a homomorphism. (To see this, let $f$ and $g$ be in $H$. Then $(f+g)_H=f_H+g_H$, as $(f+g)_H$ and $f_H+g_H$ are continuous mappings on $\lambda_HX$ which are both identical to $f+g$ on $X$, which is contained and is therefore dense in $\lambda_HX$ by Lemma \ref{TFRS}. Similarly, $(fg)_H=f_Hg_H$. Also, if $f_H$ equals to $g_H$ then the restrictions of $f_H$ and $g_H$ to $X$ are also equal.) It therefore suffices to show that $\psi$ is surjective and preserves norms.

To show that $\psi$ is an isometry, let $f$ be in $H$. By continuity of $f$ we have
\[|f_H|(\lambda_HX)=|f_H|(\mathrm{cl}_{\lambda_HX}X)\subseteq\overline{|f_H|(X)}=\overline{|f|(X)}\subseteq [0,\|f\|],\]
where the bar denotes the closure in $\mathbb{R}$. This implies that $\|f_H\|\leq\|f\|$. That $\|f\|\leq\|f_H\|$ is clear, as $f_H$ extends $f$. This shows that $\psi$ is an isometry.

Next, we show that $\psi$ is surjective, that is
\[\psi(H)=C_0(\lambda_HX).\]
Note that $\psi(H)$ is a subalgebra of $C_0(\lambda_HX)$ which is also closed in $C_0(\lambda_HX)$, as it is an isometric image of the complete normed space $H$. By Lemma \ref{YYF} it therefore suffices to verify that $\psi(H)$ satisfies the following conditions:
\begin{enumerate}
\item $\psi(H)$ does not vanish on $\lambda_HX$;
\item $\psi(H)$ separates points of $\lambda_HX$;
\item $\psi(H)$ is a self-adjoint subalgebra of $C_0(\lambda_HX)$.
\end{enumerate}

To show $(2)$, suppose that $x$ and $y$ are distinct elements of $\lambda_HX$. Let $U$ be an open neighborhood of $x$ in $\beta X$ such that $\mathrm{cl}_{\beta X}U$ does not contain $y$ and
\begin{equation}\label{JGH}
\mathrm{cl}_{\beta X}U\subseteq\lambda_HX.
\end{equation}
Let $U'$ be an open neighborhood of $x$ in $\beta X$ such that $\mathrm{cl}_{\beta X}U'\subseteq U$. Let \[A=\mathrm{cl}_X(X\cap U').\]
Then $\mathrm{cl}_{\beta X}A\subseteq U$, and thus $X\cap U$ is a neighborhood of $A$ in $X$. By (\ref{JGH}), and since $\mathrm{cl}_{\beta X}U$ is compact, it follows from Lemma \ref{ID} that $\mathrm{cl}_{\beta X}U\subseteq\mathrm{cl}_{\beta X}\mathrm{coz}(h)$ for some $h$ in $H$. In particular $U\subseteq\mathrm{cl}_{\beta X}\mathrm{coz}(h)$, which, intersecting with $X$, gives $X\cap U\subseteq\mathrm{cl}_X\mathrm{coz}(h)=\mathrm{supp}(h)$. That is, $X\cap U$ is a neighborhood in $X$ of the closed subspace $A$ of $X$ which is contained in $\mathrm{supp}(h)$. By our assumption, there is some $f$ in $H$ such that
\[f\mid_A=\mathbf{1}\quad\text{and}\quad f\mid_{X\setminus U}=\mathbf{0}.\]
Note that $\mathrm{cl}_{\beta X}U'=\mathrm{cl}_{\beta X}(X\cap U')=\mathrm{cl}_{\beta X}A$. We have
\[f_H(x)=f_\beta(x)\in f_\beta(\mathrm{cl}_{\beta X}U')=f_\beta(\mathrm{cl}_{\beta X}A)\subseteq \overline{f_\beta(A)}=\overline{f(A)}\subseteq\{1\},\]
where the bar denotes the closure in $\mathbb{C}$ and $f_\beta:\beta X\rightarrow\mathbb{C}$ denotes the continuous extension of $f$. Observe that
\[\mathrm{cl}_{\beta X}(X\cap U)\cup\mathrm{cl}_{\beta X}(X\setminus U)=\beta X.\]
Therefore, by the choice of $U$, we have
\[f_H(y)=f_\beta(y)\in f_\beta(\beta X\setminus\mathrm{cl}_{\beta X}U)\subseteq f_\beta(\mathrm{cl}_{\beta X}(X\setminus U))\subseteq\overline{f_\beta(X\setminus U)}=\overline{f(X\setminus U)}\subseteq\{0\}.\]
This implies that $f_H(x)\neq f_H(y)$. Thus $\psi(H)$ separates points of $\lambda_HX$. This shows (2).

Note that the above argument also proves (1), as for any $x$ in $\lambda_HX$ one can choose some $y$ in $\lambda_HX$ different from $x$ (which is always possible provided that $X$ is not a singleton!) and find an element $f_H$ in $\psi(H)$ for some $f$ in $H$ which does not vanish at $x$.

To prove (3), observe that for any $f$ in $H$, since $H$ is self-adjoint, $\overline{f}$ is in $H$, and therefore $\overline{f}_H$ is in $\psi(H)$. But $\overline{f}_H$ and $\overline{f_H}$ are identical (as they are continuous mappings on $\lambda_HX$ which agree on the dense subspace $X$ of $\lambda_HX$).

This shows that $\psi$ is an isometric isomorphism, and therefore, $H$ and $C_0(Y)$ are isometrically isomorphic, where $Y=\lambda_HX$. The fact that $Y$ is unique (up to homeomorphism) is immediate and follows from Lemma \ref{IJUG}.

We now verify the final assertion of the theorem. Let $H$ be unital with the unit element $u$. By our assumption, since $H$ is non-vanishing, for every $x$ in $X$ there exists some $f_x$ in $H$ such that $f_x(x)\neq 0$. But $u(x)f_x(x)=f_x(x)$ which yields $u(x)=1$. That is $u=\mathbf{1}$ and thus $H$ contains $\mathbf{1}$. Clearly, if $H$ contains $\mathbf{1}$ then $Y=\beta X$, by the definition of $\lambda_HX$. Also, if $Y$ is compact, since it contains $X$, it contains its closure in $\beta X$. Therefore $Y=\beta X$. Finally, if $Y=\beta X$, then $H$ is unital, as it is isometrically isomorphic to $C_0(Y)$ and the latter is so, since $C_0(Y)=C_B(Y)$.
\end{proof}

\begin{remark}
The closed subalgebra $H$ of $C_B(X)$ in the statement of Theorem \ref{FS} can be thought of as being a $C^*$-algebra with the standard operation $*:H\rightarrow H$ of complex conjugation (which maps $f$ to $\overline{f}$ for every $f$ in $H$). In this case, as is pointed out in the proof of Theorem \ref{FS}, we have $\psi(*(f))=*(\psi(f))$ for every $f$ in $H$, and thus, the mapping $\psi$ is an isometric $*$-isomorphism. In particular, $H$ and $C_0(Y)$ are then isometrically $*$-isomorphic.
\end{remark}

In the following we give examples of spaces $X$ and non-vanishing self-adjoint closed subalgebra $H$ of $C_B(X)$ which have local units (thus satisfying the requirements in  Theorem \ref{FS}).

Let $\mathscr{P}$ be a topological property. Then
\begin{itemize}
\item $\mathscr{P}$ is \textit{closed hereditary} if any closed subspace of a space which has $\mathscr{P}$ also has $\mathscr{P}$.
\item $\mathscr{P}$ is \textit{preserved under countable closed unions} if any space which is a countable union of its closed subspaces each having $\mathscr{P}$ also has $\mathscr{P}$.
\end{itemize}
A space $X$ is called \textit{locally-$\mathscr{P}$} if every point of $X$ has a neighborhood in $X$ which has $\mathscr{P}$.

\begin{example}
Let $\mathscr{P}$ and $\mathscr{Q}$ be topological properties such that
\begin{itemize}
\item $\mathscr{P}$ and $\mathscr{Q}$ are closed hereditary.
\item A space with both $\mathscr{P}$ and $\mathscr{Q}$ is Lindel\"{o}f.
\item $\mathscr{P}$ is preserved under countable closed unions.
\item A space with $\mathscr{Q}$ having a dense subspace with $\mathscr{P}$ has $\mathscr{P}$.
\end{itemize}
Let $X$ be a normal locally-$\mathscr{P}$ space with $\mathscr{Q}$. Let
\[C^\mathscr{P}_0(X)=\{f\in C_B(X):\mbox{$|f|^{-1}([1/n,\infty))$ has $\mathscr{P}$ for each $n$}\}.\]
We check that $C^\mathscr{P}_0(X)$ is a non-vanishing self-adjoint closed subalgebra of $C_B(X)$ which has local units. The fact that $C^\mathscr{P}_0(X)$ is a non-vanishing closed subalgebra of $C_B(X)$ is proved in Theorem 3.3.9 of \cite{Kou6}. It is also clear that $C^\mathscr{P}_0(X)$ contains $\overline{f}$ if it contains $f$. That is,  $C^\mathscr{P}_0(X)$ is self-adjoint. To show that $C^\mathscr{P}_0(X)$ has local units let $A$ be a closed subspace of $X$ and $U$ be a neighborhood of $A$ in $X$ which is contained in $\mathrm{supp}(h)$ for some $h$ in $H$. Observe that
\[C^\mathscr{P}_0(X)=\{f\in C_B(X):\mathrm{supp}(f)\mbox{ has }\mathscr{P}\}\]
by Theorem 3.3.10 of \cite{Kou6}. Since $X$ is normal, there is some $f$ in $C_B(X)$ such that $f\mid_A=\mathbf{1}$ and $f\mid_{X\setminus U}=\mathbf{0}$. Then $f$ is in $C^\mathscr{P}_0(X)$, as $\mathrm{supp}(f)$ has $\mathscr{P}$, since $\mathrm{supp}(f)$ is contained in $\mathrm{supp}(h)$ as a closed subspace, the latter has $\mathscr{P}$ and $\mathscr{P}$ is closed hereditary.

Specific examples of topological properties $\mathscr{P}$ and $\mathscr{Q}$ which satisfy the above requirements are given in Example 3.3.13 of \cite{Kou6}. This includes the case when $\mathscr{P}$ is the Lindel\"{o}f property and $\mathscr{Q}$ is paracompactness (and particularly, the case when $\mathscr{P}$ is either the Lindel\"{o}f property, second countability or separability and $\mathscr{Q}$ is metrizability).
\end{example}

Let $X$ be a completely regular space. In Theorem \ref{FS} for a non-vanishing self-adjoint closed subalgebra $H$ of $C_B(X)$ which has local units we have shown that $H$ is isometrically isomorphic to $C_0(\lambda_HX)$. It also follows from the commutative Gelfand--Naimark theorem that $H$ is isometrically isomorphic to $C_0(Y)$ in which $Y$ is the spectrum (or the maximal ideal space) of $H$ which has the Gelfand (or the Zariski) topology. The uniqueness part of Theorem \ref{FS} now implies that $Y=\lambda_HX$. We state this fact formally as a theorem.

\begin{notation}
Let $X$ be a space. We denote the spectrum of a closed subalgebra $H$ of $C_B(X)$ (considered as a $C^*$-subalgebra of $C_B(X)$ under the standard operation of complex conjugation) by $\mathfrak{sp}(H)$.
\end{notation}

\begin{theorem}\label{JUFG}
Let $X$ be a completely regular space. Let $H$ be a non-vanishing self-adjoint closed subalgebra of $C_B(X)$ which has local units. Then
\[\mathfrak{sp}(H)=\lambda_HX.\]
\end{theorem}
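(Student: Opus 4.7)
The plan is essentially to combine Theorem \ref{FS} with the commutative Gelfand--Naimark theorem and invoke the uniqueness clause supplied by Lemma \ref{IJUG}. So the proof should be extremely short, since the hard work has already been done.

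First I would invoke Theorem \ref{FS}: under the hypotheses on $H$, the mapping $\psi:H\to C_0(\lambda_HX)$ is an isometric isomorphism, and $\lambda_HX$ is locally compact. Next I would apply the commutative Gelfand--Naimark theorem to the $C^*$-algebra $H$ (with complex conjugation as the involution), obtaining an isometric $*$-isomorphism of $H$ onto $C_0(\mathfrak{sp}(H))$, where $\mathfrak{sp}(H)$ is locally compact by construction. Composing these two isomorphisms yields an isometric isomorphism
\[
C_0(\lambda_HX)\cong C_0\bigl(\mathfrak{sp}(H)\bigr).
\]

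At this point Lemma \ref{IJUG} (the Banach--Stone type statement already recorded in the excerpt) applies directly, since both $\lambda_HX$ and $\mathfrak{sp}(H)$ are locally compact. It gives a homeomorphism $\lambda_HX\cong\mathfrak{sp}(H)$, which under the standing identification of the Gelfand spectrum up to homeomorphism is the required equality $\mathfrak{sp}(H)=\lambda_HX$.

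There is no real obstacle, as all inputs are on the table; the only care needed is to make sure the two isomorphisms are genuinely isometric (hence Lemma \ref{IJUG} is applicable) rather than merely algebraic. This is fine: $\psi$ is built as an isometry in the proof of Theorem \ref{FS}, and the Gelfand representation for a commutative $C^*$-algebra is automatically isometric. Thus the statement $\mathfrak{sp}(H)=\lambda_HX$ follows by a one-paragraph composition of results already established.
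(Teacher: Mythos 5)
Your proposal is correct and is essentially the paper's own argument: the authors also obtain Theorem \ref{JUFG} by combining the isometric isomorphism $H\cong C_0(\lambda_HX)$ from Theorem \ref{FS} with the Gelfand--Naimark representation $H\cong C_0(\mathfrak{sp}(H))$ and then invoking the uniqueness clause of Theorem \ref{FS}, which is exactly the Banach--Stone statement of Lemma \ref{IJUG} that you cite. No gaps.
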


\section{Compactness and connectedness properties of spectrum}

In this section we use the representation given in Theorem \ref{FS} to study various properties of the spectrum.

\begin{lemma}\label{FYS}
Let $X$ be a normal space. Let $H$ be a non-vanishing self-adjoint closed subalgebra of $C_B(X)$ which has local units. Let
\[\psi:H\longrightarrow C_0(\lambda_HX)\]
be defined by $\psi(f)=f_H$ for any $f$ in $H$. Then
\[\psi^{-1}(C_{00}(\lambda_HX))=\{f\in H:\mathrm{supp}(f)\subseteq\mathrm{int}_X\mathrm{supp}(h)\mbox{ for some }h\in H\}.\]
\end{lemma}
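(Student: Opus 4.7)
The plan is to prove the two inclusions separately, relying on the local units property of $H$, Lemma \ref{UU}, and Lemma \ref{ID}.

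For the direction $\supseteq$, I would begin with $f \in H$ and $h \in H$ satisfying $\mathrm{supp}(f) \subseteq \mathrm{int}_X\mathrm{supp}(h)$, and apply the local units property with $A = \mathrm{supp}(f)$ and the open neighborhood $U = \mathrm{int}_X\mathrm{supp}(h)$ (which is contained in $\mathrm{supp}(h)$ by construction). This yields $\phi \in H$ with $\phi\mid_{\mathrm{supp}(f)} = \mathbf{1}$ and $\phi\mid_{X\setminus U} = \mathbf{0}$. By Lemma \ref{UU}, $\phi_H \in C_0(\lambda_H X)$, so the set $\{y \in \lambda_H X : |\phi_H(y)| \geq 1\}$ is compact. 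Since $\phi_H$ equals $1$ on $\mathrm{coz}(f)$, continuity gives $\mathrm{cl}_{\lambda_H X}\mathrm{coz}(f) \subseteq \{y : |\phi_H(y)| \geq 1\}$, and this closure coincides with $\mathrm{cl}_{\lambda_H X}\mathrm{coz}(f_H) = \mathrm{supp}(f_H)$ via the standard density argument for the open set $\mathrm{coz}(f_H)$ in $\lambda_H X$ (invoking Lemma \ref{TFRS}). Hence $\mathrm{supp}(f_H)$ is compact and $f_H \in C_{00}(\lambda_H X)$.

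For the direction $\subseteq$, suppose $f_H \in C_{00}(\lambda_H X)$ and put $K = \mathrm{supp}(f_H)$, a compact subset of $\lambda_H X$. Normality of the compact Hausdorff space $\beta X$, applied twice, produces nested open sets $W_1, W_2 \subseteq \beta X$ with
\[
K \subseteq W_1 \subseteq \mathrm{cl}_{\beta X}W_1 \subseteq W_2 \subseteq \mathrm{cl}_{\beta X}W_2 \subseteq \lambda_H X.
\]
Lemma \ref{ID} applied to the compact set $\mathrm{cl}_{\beta X}W_2$ produces $g \in H$ with $\mathrm{cl}_{\beta X}W_2 \subseteq \mathrm{cl}_{\beta X}\mathrm{coz}(g)$. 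Setting $V = W_2 \cap X$, routine manipulations with the $\beta X$ identities listed in the introduction show that $V$ is open in $X$, that $\mathrm{supp}(f) \subseteq \mathrm{cl}_{\beta X}W_1 \cap X \subseteq V$ (using $\mathrm{coz}(f) \subseteq K \subseteq W_1$), and that $V \subseteq \mathrm{cl}_{\beta X}\mathrm{coz}(g) \cap X = \mathrm{supp}(g)$. Local units applied to $A = \mathrm{supp}(f)$ and the neighborhood $V$ then yields $h \in H$ with $h\mid_{\mathrm{supp}(f)} = \mathbf{1}$ and $h\mid_{X\setminus V} = \mathbf{0}$, giving $\mathrm{supp}(f) \subseteq \mathrm{coz}(h) \subseteq \mathrm{int}_X\mathrm{supp}(h)$.

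The main technical point lies in the $\subseteq$ direction, where the double nesting in $\beta X$ is essential: a single open layer $W \supseteq K$ with $\mathrm{cl}_{\beta X}W \subseteq \lambda_H X$ would only yield $\mathrm{supp}(f) \subseteq \mathrm{cl}_X(W \cap X)$ rather than $\mathrm{supp}(f) \subseteq W \cap X$, and so would fail to present a genuine open neighborhood of $\mathrm{supp}(f)$ to the local units property. The buffer $W_2$ absorbs this closure, ensuring $V = W_2 \cap X$ genuinely contains $\mathrm{supp}(f)$. It is worth noting that the normality hypothesis on $X$ does not appear to enter the argument, since local units already provides the Urysohn-type separation that normality would otherwise deliver.
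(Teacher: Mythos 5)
Your proof is correct, but it diverges from the paper's in both directions, most interestingly in the direction $\supseteq$. There the paper invokes normality of $X$ via the Urysohn lemma to manufacture an auxiliary $g\in C_B(X)$ (not necessarily in $H$) with $g\mid_{\mathrm{supp}(f)}=\mathbf{1}$ and $g\mid_{X\setminus\mathrm{int}_X\mathrm{supp}(h)}=\mathbf{0}$, and then argues through $g_\beta^{-1}((\tfrac12,1])$ that $\mathrm{cl}_{\beta X}\mathrm{coz}(f)\subseteq\lambda_HX$, so that $\mathrm{supp}(f_H)=\mathrm{cl}_{\beta X}\mathrm{coz}(f)$ is closed in $\beta X$ and hence compact. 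You instead pull the separating function $\phi$ out of the local units property itself and trap $\mathrm{supp}(f_H)$ inside the compact set $|\phi_H|^{-1}([1,\infty))$ furnished by Lemma \ref{UU}. Your route is shorter and, as you observe, makes no use of normality of $X$ -- the local units hypothesis already supplies exactly the Urysohn-type function the paper builds by hand -- which suggests the normality hypothesis in the lemma (and downstream in Theorem \ref{KGDD}) is dispensable. In the direction $\subseteq$ both you and the paper apply Lemma \ref{ID} to a compact closure wedged between $\mathrm{supp}(f_H)$ and $\lambda_HX$; your second layer $W_2$ and the closing appeal to local units are harmless but superfluous.

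One point in your commentary is wrong, though it does not damage the proof: a single open layer \emph{does} suffice in the $\subseteq$ direction. Since $K=\mathrm{supp}(f_H)$ is compact it is closed in $\beta X$, and it contains $\mathrm{coz}(f)$, so it already contains $\mathrm{cl}_{\beta X}\mathrm{coz}(f)$; therefore $\mathrm{supp}(f)=X\cap\mathrm{cl}_{\beta X}\mathrm{coz}(f)\subseteq X\cap W$, where $W$ is any open set of $\beta X$ containing $K$ with $\mathrm{cl}_{\beta X}W\subseteq\lambda_HX$. The set $X\cap W$ is genuinely open in $X$ and contained in $\mathrm{supp}(h)$ after Lemma \ref{ID}, so $\mathrm{supp}(f)\subseteq\mathrm{int}_X\mathrm{supp}(h)$ follows at once, with no need for the buffer $W_2$ or for presenting anything to the local units property. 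This is exactly what the paper does.
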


\begin{proof}
Let $f$ be in $\psi^{-1}(C_{00}(\lambda_HX))$. Then $f_H$ is in $C_{00}(\lambda_HX)$, that is, $\mathrm{supp}(f_H)$ is a compact subspace of $\lambda_HX$. Let $U$ be an open subspace of $\beta X$ such that
\[\mathrm{supp}(f_H)\subseteq U\subseteq\mathrm{cl}_{\beta X}U\subseteq\lambda_HX.\]
Since $\mathrm{cl}_{\beta X}U$ is compact, by Lemma \ref{ID} we have
\[\mathrm{cl}_{\beta X}U\subseteq\mathrm{cl}_{\beta X}\mathrm{coz}(h)\]
for some $h$ in $H$. Clearly, $\mathrm{coz}(f)\subseteq\mathrm{coz}(f_H)$, as $f_H$ extends $f$. In particular $\mathrm{coz}(f)\subseteq\mathrm{supp}(f_H)$, and thus since the latter is closed in $\beta X$ (as it is compact) $\mathrm{cl}_{\beta X}\mathrm{coz}(f)\subseteq\mathrm{supp}(f_H)$. Now, using the above relations, we have
\[\mathrm{cl}_{\beta X}\mathrm{coz}(f)\subseteq U\subseteq\mathrm{cl}_{\beta X}\mathrm{coz}(h),\]
which if we intersect each side with $X$ it yields
\[\mathrm{supp}(f)=\mathrm{cl}_X\mathrm{coz}(f)\subseteq X\cap U\subseteq\mathrm{cl}_X\mathrm{coz}(h)=\mathrm{supp}(h).\]
Therefore $\mathrm{supp}(f)\subseteq\mathrm{int}_X\mathrm{supp}(h)$.

For the converse, let $f$ be in $H$ such that $\mathrm{supp}(f)\subseteq\mathrm{int}_X\mathrm{supp}(h)$ for some $h$ in $H$. Since $X$ is normal, by the Urysohn lemma, there is a continuous mapping $g:X\rightarrow [0,1]$ such that
\[g\mid_{\mathrm{supp}(f)}=\mathbf{1}\quad\text{and}\quad g\mid_{X\setminus\mathrm{int}_X\mathrm{supp}(h)}=\mathbf{0}.\]
Let $g_\beta:\beta X\rightarrow [0,1]$ be the continuous extension of $g$. Note that
\[g_\beta^{-1}((\tfrac{1}{2},1])\subseteq\mathrm{cl}_{\beta X}g_\beta^{-1}((\tfrac{1}{2},1])
=\mathrm{cl}_{\beta X}(X\cap g_\beta^{-1}((\tfrac{1}{2},1]))
=\mathrm{cl}_{\beta X}g^{-1}((\tfrac{1}{2},1])\]
and
\[\mathrm{cl}_{\beta X}g^{-1}((\tfrac{1}{2},1])\subseteq\mathrm{cl}_{\beta X}\mathrm{supp}(h)=\mathrm{cl}_{\beta X}\mathrm{coz}(h),\]
using the definition of $g$. Therefore
\[g_\beta^{-1}((\tfrac{1}{2},1])\subseteq\mathrm{int}_{\beta X}\mathrm{cl}_{\beta X}\mathrm{coz}(h).\]
Also
\[\mathrm{cl}_{\beta X}\mathrm{coz}(f)\subseteq g_\beta^{-1}(1)\]
by the definition of $g$ and $\mathrm{int}_{\beta X}\mathrm{cl}_{\beta X}\mathrm{coz}(h)\subseteq\lambda_HX$ by the definition of $\lambda_HX$. Thus $\mathrm{cl}_{\beta X}\mathrm{coz}(f)\subseteq\lambda_HX$. It now follows that
\begin{eqnarray*}
\mathrm{supp}(f_H)&=&\mathrm{cl}_{\lambda_HX}\mathrm{coz}(f_H)\\&=&\mathrm{cl}_{\lambda_HX}(X\cap\mathrm{coz}(f_H))\\
&=&\mathrm{cl}_{\lambda_HX}(\mathrm{coz}(f))\\&=&\lambda_HX\cap\mathrm{cl}_{\beta X}\mathrm{coz}(f)=\mathrm{cl}_{\beta X}\mathrm{coz}(f)
\end{eqnarray*}
is compact. (Note that $X\subseteq\lambda_HX$ by Lemma \ref{TFRS}.) Therefore $f_H$ is in $C_{00}(\lambda_HX)$. That is, $f$ is in $\psi^{-1}(C_{00}(\lambda_HX))$.
\end{proof}

Let $X$ be a locally compact space. It is known that $C_0(X)=C_{00}(X)$ if and only if every $\sigma$-compact subspace of $X$ is contained in a compact subspace of $X$. (See Problem 7G.2 of \cite{GJ}.) In particular, $C_0(X)=C_{00}(X)$ implies that $X$ is countably compact. Every countably compact paracompact space is necessarily compact. (See Theorem 5.1.20 of \cite{E}.) Therefore, if $X$ is non-compact, then $C_0(X)=C_{00}(X)$ implies that $X$ is non-paracompact (and thus, in particular, non-metrizable and non-Lindel\"{o}f). (Every Lindel\"{o}f space, in particular every compact space, and every metrizable space is paracompact; see Theorems 5.1.1--5.1.3 of \cite{E}.)

\begin{theorem}\label{KGDD}
Let $X$ be a normal space. Let $H$ be a non-vanishing self-adjoint closed subalgebra of $C_B(X)$ which has local units. The following are equivalent:
\begin{itemize}
\item[(1)] For any $f$ in $H$ there is some $h$ in $H$ with $\mathrm{supp}(f)\subseteq\mathrm{int}_X\mathrm{supp}(h)$.
\item[(2)] Every $\sigma$-compact subspace of $\mathfrak{sp}(H)$ is contained in a compact subspace of $\mathfrak{sp}(H)$; in particular, $\mathfrak{sp}(H)$ is countably compact.
\item[(3)] $C_{00}(\mathfrak{sp}(H))=C_0(\mathfrak{sp}(H))$.
\end{itemize}
\end{theorem}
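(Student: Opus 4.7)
The plan is to pivot the entire argument through the isometric isomorphism $\psi\colon H\to C_0(\lambda_HX)$ supplied by Theorem \ref{FS}, together with the identification $\mathfrak{sp}(H)=\lambda_HX$ from Theorem \ref{JUFG}. Since $\lambda_HX$ is open in $\beta X$, it is locally compact Hausdorff, so the standard facts about $C_0$ and $C_{00}$ are available. I would prove $(1)\Leftrightarrow(3)$ by a direct application of Lemma \ref{FYS}, and $(2)\Leftrightarrow(3)$ by quoting the characterization stated in the paragraph preceding the theorem (Problem 7G.2 of \cite{GJ}).

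For $(1)\Leftrightarrow(3)$, the key observation is that condition (1) is precisely the statement that every $f\in H$ lies in the set
\[\{f\in H:\mathrm{supp}(f)\subseteq\mathrm{int}_X\mathrm{supp}(h)\text{ for some }h\in H\},\]
which by Lemma \ref{FYS} is exactly $\psi^{-1}(C_{00}(\lambda_HX))$. Hence (1) says $\psi^{-1}(C_{00}(\lambda_HX))=H$, and since $\psi$ is a bijection onto $C_0(\lambda_HX)$, applying $\psi$ to both sides turns this into $C_{00}(\lambda_HX)=C_0(\lambda_HX)$, i.e.\ (3). This direction is essentially bookkeeping once Lemma \ref{FYS} is in hand.

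For $(2)\Leftrightarrow(3)$, I would simply invoke the cited classical result: on a locally compact Hausdorff space $Y$, the equality $C_0(Y)=C_{00}(Y)$ holds if and only if every $\sigma$-compact subspace of $Y$ sits inside some compact subspace of $Y$. Applied with $Y=\mathfrak{sp}(H)=\lambda_HX$ this gives $(2)\Leftrightarrow(3)$ at once. The trailing clause that $\mathfrak{sp}(H)$ is then countably compact is immediate: any countably infinite subset of $\mathfrak{sp}(H)$ is $\sigma$-compact (a countable union of singletons), hence is contained in a compact subspace, and therefore admits an accumulation point in $\mathfrak{sp}(H)$.

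No step here looks like a genuine obstacle: the substantive content is already packaged in Lemma \ref{FYS} and in the classical $C_0=C_{00}$ characterization. The only thing to remember is that $\lambda_HX$ is locally compact Hausdorff, so that the $C_0$-$C_{00}$ theorem and Lemma \ref{FYS} are both legitimately applicable. The overall proof should therefore be short, with its weight entirely on the earlier lemma.
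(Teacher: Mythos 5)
Your proposal is correct and follows essentially the same route as the paper: the equivalence $(1)\Leftrightarrow(3)$ by translating through $\psi$ via Lemma \ref{FYS}, and $(2)\Leftrightarrow(3)$ by the classical $C_0=C_{00}$ characterization on the locally compact space $\lambda_HX=\mathfrak{sp}(H)$. No gaps.
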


\begin{proof}
Conditions (2) and (3) are equivalent by the observation made preceding the statement of the theorem. (Note that $\mathfrak{sp}(H)=\lambda_HX$ by Theorem \ref{JUFG}.)

Let the mapping $\psi:H\rightarrow C_0(\lambda_HX)$ be defined by $\psi(f)=f_H$ for any $f$ in $H$. Then $\psi$ is an isometric isomorphism by (the proof of) Theorem \ref{FS}. Note that $(1)$ is equivalent to $H=\psi^{-1}(C_{00}(\lambda_HX))$ by Lemma \ref{FYS}. But this holds if and only if $\psi(H)=C_{00}(\lambda_HX)$ (and since $\psi$ is surjective) if and only if $C_0(\lambda_HX)=C_{00}(\lambda_HX)$. This is equivalent to (2).
\end{proof}

In the above theorem we considered compactness properties (such as countable compactness) of the spectrum. In the next few theorems we consider connectedness properties of the spectrum. (Compare Theorems \ref{KGDD} and \ref{JJGD} with Theorems 3.2.10 and 3.2.11 of \cite{Kou6}, respectively.)

We first consider the case of usual connectedness. We will use the well known fact that the Stone--\v{C}ech compactification $\beta X$ of a completely regular space $X$ is connected if and only if $X$ is connected. (See Problem 6L of \cite{GJ}.) Recall that an algebra is said to be \textit{indecomposable} if it has no idempotent except $\mathbf{0}$ (and $\mathbf{1}$, if it is unital).‎

\begin{theorem}\label{JJGD}
Let $X$ be a completely regular space. Let $H$ be a non-vanishing self-adjoint closed subalgebra of $C_B(X)$ which has local units. The following are equivalent:
\begin{itemize}
\item[(1)] ‎‎$X‎$ ‎is connected.‎
\item[(2)] ‎‎$\mathfrak{sp}(H)‎$ ‎is connected.‎
\item[(3)] $‎H‎$ ‎is ‎indecomposable.‎
\end{itemize}
\end{theorem}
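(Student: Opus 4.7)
The plan is to establish (1) $\Leftrightarrow$ (2) by a density argument together with a listed property of $\beta X$, and then (1) $\Leftrightarrow$ (3) via the isometric isomorphism $\psi : H \to C_0(\lambda_H X)$ from Theorem~\ref{FS}. For (1) $\Rightarrow$ (2): by Lemma~\ref{TFRS} $X$ is contained and dense in $\lambda_H X$ (since $X$ is dense in the ambient $\beta X$), and as any space between a connected subspace and its closure is connected, $\lambda_H X$ inherits connectedness from $X$. For (2) $\Rightarrow$ (1), I argue contrapositively: if $X = A \sqcup B$ with $A, B$ disjoint non-empty open-closed, then by the listed property of $\beta X$ the closures $\mathrm{cl}_{\beta X} A$ and $\mathrm{cl}_{\beta X} B$ are disjoint, and their union is $\mathrm{cl}_{\beta X}(A\cup B)=\beta X$; intersecting with $\lambda_H X$ gives a clopen decomposition $\lambda_H X = (\lambda_H X \cap \mathrm{cl}_{\beta X} A)\sqcup(\lambda_H X \cap \mathrm{cl}_{\beta X} B)$ with both parts non-empty.

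For (1) $\Rightarrow$ (3): an idempotent $e \in H$ has $e(X)\subseteq\{0,1\}$ by $e^2=e$, so by continuity and connectedness of $X$ it is constant; the value $1$ forces $\mathbf{1} \in H$ and $e = \mathbf{1}$ by Theorem~\ref{FS}, while the value $0$ gives $e = \mathbf{0}$, both trivial, so $H$ is indecomposable. For (3) $\Rightarrow$ (1) I argue contrapositively and aim to exhibit $\chi_A \in H$ as a non-trivial idempotent for a clopen decomposition $X = A \sqcup B$. The local-units hypothesis applied to $A' = A$ and $U = A$ yields $\chi_A \in H$ as soon as we produce some $h \in H$ with $A \subseteq \mathrm{supp}(h)$. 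To construct such $h$: for each $a \in A$, non-vanishing supplies $h_a \in H$ with $h_a(a)\neq 0$, and then local units with $A'=\{a\}$ and $U = \mathrm{coz}(h_a)\cap A \subseteq \mathrm{supp}(h_a)$ yields $f_a \in H$ with $f_a(a)=1$ and $\mathrm{coz}(f_a)\subseteq A$. Using $|f_a|^2 = f_a\bar{f_a} \in H$ (self-adjointness of $H$) together with the closedness of $H$ under uniform limits, I aggregate the $\{f_a\}_{a\in A}$ into a single $h\in H$ whose cozero-set covers $A$, so that $\mathrm{supp}(h)\supseteq A$.

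The main obstacle will be this aggregation step. When $A$ is Lindel\"of, the cover $\{\mathrm{coz}(f_a)\}_{a\in A}$ of $A$ admits a countable subcover $\{\mathrm{coz}(f_{a_n})\}_n$, and the geometric series $h=\sum_{n=1}^\infty 2^{-n}\|f_{a_n}\|^{-2}|f_{a_n}|^2$ converges in the uniform norm and lies in $H$ by closedness, with $\mathrm{coz}(h)=\bigcup_n \mathrm{coz}(f_{a_n})\supseteq A$. In the general case, the content of the argument amounts to showing $\mathrm{cl}_{\beta X} A \subseteq \lambda_H X$; once this holds, $\mathrm{cl}_{\lambda_H X}A = \mathrm{cl}_{\beta X}A$ is a compact clopen subset of $\lambda_H X$, so $\chi_{\mathrm{cl}_{\beta X}A}\in C_0(\lambda_H X)$ is a non-trivial idempotent whose preimage under $\psi$ is $\chi_A \in H$.
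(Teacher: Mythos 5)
Your treatment of (1) $\Leftrightarrow$ (2) and of (1) $\Rightarrow$ (3) is correct and is essentially the paper's own argument: density of $X$ in $\lambda_HX$ gives one direction, the disjointness of the closures in $\beta X$ of disjoint open--closed sets gives the other, and an idempotent of $H$ is $\{0,1\}$-valued, hence constant on a connected $X$, hence trivial.

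The gap is in (3) $\Rightarrow$ (1), and you have put your finger on exactly the right spot: to invoke local units for the pair $A$, $U=A$ one must first produce $h\in H$ with $A\subseteq\mathrm{supp}(h)$, and neither your Lindel\"{o}f aggregation nor the unproved assertion $\mathrm{cl}_{\beta X}A\subseteq\lambda_HX$ covers the general case. In fact your target $\chi_A\in H$ is unattainable: let $X$ be an uncountable discrete space and $H=\{f\in C_B(X):\mathrm{coz}(f)\mbox{ is countable}\}$; this is a non-vanishing self-adjoint closed subalgebra with local units, yet $\chi_A\notin H$ whenever $A$ is uncountable, so for a separation into two uncountable pieces no argument can produce $\chi_A$, and one would have to exhibit some other nontrivial idempotent instead (here $\chi_{\{x\}}$ works). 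Worse, even that weaker goal fails in general, so the implication cannot be proved as stated: take $X$ to be the disjoint union of two copies of the long ray $[0,\omega_1)\times[0,1)$ and $H=C_0(X)$. Every $\sigma$-compact subspace of $X$ lies in a compact one, so $C_0(X)=C_{00}(X)$, from which all the hypotheses (including local units, via local compactness and the compactness of every support) are readily checked; but $X$ has no nonempty compact open subset, so $\mathbf{0}$ is the only idempotent of $H$ and $H$ is indecomposable in the sense defined in the paper, while $X$ is disconnected. The paper's own proof of (3) $\Rightarrow$ (1) founders on the very step you flagged --- it applies local units to $A=V$ with neighborhood $V$ without verifying $V\subseteq\mathrm{supp}(h)$ --- so you should not feel obliged to reproduce it. The equivalence is restored if one strengthens (3), for instance by reading ``indecomposable'' as ``not isomorphic to a direct product of two nonzero algebras'': a separation $X=A\sqcup B$ yields the clopen decomposition $\lambda_HX=\mathrm{cl}_{\lambda_HX}A\sqcup\mathrm{cl}_{\lambda_HX}B$ and hence $H\cong C_0(\mathrm{cl}_{\lambda_HX}A)\times C_0(\mathrm{cl}_{\lambda_HX}B)$, and conversely; alternatively, your argument does go through under the extra hypothesis that $X\subseteq\mathrm{supp}(h)$ for some $h\in H$, or when the pieces of the separation are Lindel\"{o}f as in your aggregation step.
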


\begin{proof}‎
Note that $‎\lambda_HX‎$ (‎$=\mathfrak{sp}(H)‎$ by Theorem \ref{JUFG}) contains $X$ by Lemma \ref{TFRS}, and is contained in $\beta X$. Therefore $\beta(\lambda_HX)=\beta X$.‎ Thus, ‎‎$X‎$ ‎is connected if and only if $\beta X$ is connected if and only if $\beta(\lambda_HX)$ is connected if and only if $\lambda_HX$ is connected. This shows the equivalence of (1) and (2).

(1) \textit{implies} (3). Suppose that $‎H‎$ ‎is not ‎indecomposable.‎ Then $H$ has an idempotent $f$ other than $\mathbf{0}$ and $\mathbf{1}$. Clearly, $f$ is $\{0,1\}$-valued. In particular, $f^{-1}(0)$ and $f^{-1}(1)$ form a separation for $X$, and $X$ is therefore disconnected.

(3) \textit{implies} (1). Suppose that ‎‎$X‎$ ‎is disconnected. Let $U$ and $V$ be a separation for ‎‎$X‎$. Then $U$ and $V$ are disjoint closed subspaces of $X$ and therefore by our assumption ‎‎there ‎is some ‎‎$‎f‎$ ‎in ‎‎$‎H‎$ ‎such ‎that ‎‎$‎f\mid_U=\mathbf{0}‎$ ‎and ‎‎$‎f\mid_V=\mathbf{1}‎$‎. That is, $f$ is the characteristic function $\chi_V$ on $X$. Clearly, $f$ is idempotent and $f$ is neither $\mathbf{0}$ nor $\mathbf{1}$. That is $H$ is not ‎indecomposable.‎
‎\end{proof}

We do not know how Theorem \ref{JJGD} can be formulated in the context of local connectedness. For possible future reference we record this below as an open question ‎

\begin{question}\label{GHFA}
Let $X$ be a completely regular space. For a non-vanishing self-adjoint closed subalgebra $H$ of $C_B(X)$ find necessary and sufficient conditions $($in terms of either $X$ or $H$$)$ for ‎‎$\mathfrak{sp}(H)‎$ to be locally connected.‎
\end{question}

As we have just pointed out, we do not know the local connectedness version of Theorem \ref{JJGD}; however, we can obtain some results in the presence of a compactness property called \textit{pseudocompactness}. Recall that a space $X$ is called \textit{pseudocompact}‎ if there is no unbounded continuous scalar valued mapping on $X$.

In the next theorem we need to use the following lemma which is due to Henriksen and Isbell in \cite{HI} (that (1) implies (2) is actually due to Banaschewski in \cite{B}).

\begin{lemma}\label{FPO}
Let $X$ be a completely regular space. The following are equivalent:
\begin{itemize}
\item[(1)] ‎‎$X‎$ ‎is locally connected and pseudocompact‎.‎
\item[(2)] ‎‎$\beta X$ is locally connected.
\end{itemize}
\end{lemma}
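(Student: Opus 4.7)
The plan is to invoke the two classical theorems that the statement itself attributes: (1) $\Rightarrow$ (2) is Banaschewski's theorem \cite{B}, and (2) $\Rightarrow$ (1) is due to Henriksen and Isbell \cite{HI}. A self-contained argument would be organized along the following lines.

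For (1) $\Rightarrow$ (2): given $p \in \beta X$ and an open neighborhood $U$ of $p$ in $\beta X$, decompose $U \cap X$ into its $X$-components $\{C_\alpha\}$, each of which is open in $X$ by local connectedness. Pass to $W_\alpha := \mathrm{int}_{\beta X} \mathrm{cl}_{\beta X} C_\alpha$; these are open in $\beta X$ and connected, since $\mathrm{cl}_{\beta X} C_\alpha$ is connected (being the closure of the connected set $C_\alpha$) and $W_\alpha$ is dense in $\mathrm{cl}_{\beta X} C_\alpha$. The role of pseudocompactness is to forbid locally infinite subfamilies of non-empty open subsets of $X$; this controls the family $\{C_\alpha\}$ enough to assemble the $W_\alpha$'s meeting a neighborhood of $p$ into a single connected open neighborhood of $p$ contained in $U$.

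For (2) $\Rightarrow$ (1): local connectedness of $X$ is inherited from that of $\beta X$ using the density and $C^*$-embedding of $X$ in $\beta X$ together with the fact that $\beta X$-components of open sets in $\beta X$ form an open base; one then traces such a base back to $X$ by intersection, checking via the $C^*$-embedding that the traces remain connected. Pseudocompactness is the deeper half: an unbounded continuous $f : X \to \mathbb{R}$ would produce a discrete sequence $\{x_n\}$ in $X$ together with pairwise disjoint non-empty open neighborhoods of its members, and any cluster point of this sequence in $\beta X \setminus X$ would then fail to possess a connected neighborhood, contradicting (2).

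The main obstacle is the pseudocompactness half of (2) $\Rightarrow$ (1), since extracting a global compactness-type property from the purely local information that $\beta X$ is locally connected is exactly the substance of the Henriksen--Isbell result. Accordingly, I would cite \cite{HI} and \cite{B} directly rather than attempt to reprove the lemma from scratch, since the statement is presented in the paper as a packaging of their classical theorems intended to be used as a black box in the connectedness arguments that follow.
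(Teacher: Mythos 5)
The paper offers no proof of this lemma; it simply cites Henriksen--Isbell \cite{HI} (with the implication (1)\,$\Rightarrow$\,(2) attributed to Banaschewski \cite{B}), which is exactly what you conclude by doing. Your proposal is therefore correct and takes the same approach as the paper, namely treating the lemma as a black-box citation of the classical results.
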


\begin{theorem}\label{JHGO}
Let $X$ be a completely regular space. Let $H$ be a non-vanishing self-adjoint closed subalgebra of $C_B(X)$ which has local units. The following are equivalent:
\begin{itemize}
\item[(1)] ‎‎$\mathfrak{sp}(H)‎$ is locally connected and pseudocompact‎.‎
\item[(2)] ‎‎$X‎$ ‎is locally connected and pseudocompact‎.‎
\end{itemize}
\end{theorem}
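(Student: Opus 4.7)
The plan is to reduce both sides of the equivalence to a single statement about $\beta X$, namely that $\beta X$ is locally connected, and then invoke Lemma \ref{FPO} twice.

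First I would note that $\mathfrak{sp}(H) = \lambda_H X$ by Theorem \ref{JUFG}, and that $\lambda_H X$ is an open subspace of $\beta X$; being open in a compact Hausdorff space it is completely regular (in fact locally compact Hausdorff), so Lemma \ref{FPO} is applicable to it. Furthermore, since $X \subseteq \lambda_H X \subseteq \beta X$, the bullet point in the introduction gives
\[
\beta(\lambda_H X) = \beta X.
\]

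Now I would apply Lemma \ref{FPO} to $\lambda_H X$ to obtain that $\lambda_H X$ is locally connected and pseudocompact if and only if $\beta(\lambda_H X) = \beta X$ is locally connected. Similarly, applying Lemma \ref{FPO} to $X$ itself gives that $X$ is locally connected and pseudocompact if and only if $\beta X$ is locally connected. Both conditions (1) and (2) are therefore equivalent to the single condition that $\beta X$ is locally connected, hence they are equivalent to each other.

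I do not foresee any serious obstacle: the whole argument is a two-step application of Lemma \ref{FPO}, held together by the stability property $\beta T = \beta X$ for any $T$ with $X \subseteq T \subseteq \beta X$, which is already recorded in the introduction. The only point that deserves a line of justification is the complete regularity of $\lambda_H X$, needed to legitimately apply Lemma \ref{FPO} to it, and this is automatic because $\lambda_H X$ is open in the compact Hausdorff space $\beta X$.
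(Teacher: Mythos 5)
Your proposal is correct and is essentially identical to the paper's own proof: both reduce (1) and (2) to the single condition that $\beta X$ is locally connected, using $\beta(\lambda_HX)=\beta X$ (which holds since $X\subseteq\lambda_HX\subseteq\beta X$ by Lemma \ref{TFRS}) and two applications of Lemma \ref{FPO}. The extra remark on the complete regularity of $\lambda_HX$ is a sensible, if routine, point of care.
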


\begin{proof}‎
Note that $\beta‎(‎\lambda_HX‎)=\beta ‎X$, as $X\subseteq‎\lambda_HX\subseteq\beta ‎X$ by Lemma \ref{TFRS}. It now follows from Lemma \ref{FPO} that $X‎$ ‎is locally connected and pseudocompact‎ if and only if $\beta ‎X=\beta‎(‎\lambda_HX)$ is locally connected if and only if $‎\lambda_HX‎$ (‎$=\mathfrak{sp}(H)‎$ by Theorem \ref{JUFG}) ‎is locally connected and pseudocompact.
\end{proof}

In the next few results we study various (dis)connectedness properties of the spectrum. These properties are total disconnectedness, zero-dimensionality, strong ‎zero-dimensionality, basic disconnectedness, extreme disconnectedness, and being an $F$-space. There is some disagreement on definition of some of these properties, so we define them below to avoid confusion. Recall that two subspaces $‎A$ and $B‎$ ‎of a space $X$ are called \textit{completely separated} if there is a continuous mapping $f:X\rightarrow[0,1]$ such that $f\mid_A=\mathbf{1}$ and $f\mid_B=\mathbf{0}$.

Let $X$ be a completely regular space. The space $X$ is called
\begin{enumerate}
  \item \textit{totally disconnected} if $X$ does not contain any connected subspace of cardinality larger than one, or, equivalently, if every component of $X$ is a singleton.
  \item \textit{‎zero-dimensional} if the set of all open and closed subspaces of $X$ forms an open base for $X$.
  \item \textit{strongly ‎zero-dimensional} if every two completely separated subspaces of $X$ are separated by two disjoint ‎open and closed ‎subspaces.
  \item \textit{basically disconnected} if the closure of every cozero-set of $X$ is open.
  \item \textit{extremally disconnected} if the closure of every open subspace of $X$ is open.
  \item \textit{an $F$-space} if any two disjoint cozero-sets in $X$ are completely separated.
\end{enumerate}

See Section 6.2 of \cite{E} for definitions of (1)--(3) and (5), and see Problems 1H and 14N of \cite{GJ} for definitions of (4) and (6), respectively. It is known that $(5)\Rightarrow(4)\Rightarrow(3)\Rightarrow(2)\Rightarrow(1)$ and $(4)\Rightarrow(6)$. (See Theorem 6.2.1 of \cite{E} for $(2)\Rightarrow(1)$, and Theorem 6.2.6 of \cite{E} for $(3)\Rightarrow(2)$. To see $(4)\Rightarrow(3)$, let $X$ be a basically disconnected space. Then $\beta X$ is basically disconnected by Problem 6M.1 of \cite{GJ}. But every basically disconnected space is zero-dimensional by Problem 4K.8 of \cite{GJ}. Therefore $\beta X$ is zero-dimensional, and thus is  strongly zero-dimensional by Theorem 6.2.7 of \cite{E}, as it is compact. But $\beta X$ is strongly zero-dimensional if and only if $X$ is strongly zero-dimensional by Theorem 6.2.12 of \cite{E}. The implication $(5)\Rightarrow(4)$ is clear. The implication $(4)\Rightarrow(6)$ follows from Problem 14N.4 of \cite{GJ}.)

\begin{lemma}\label{PFD}
Let $X$ be a completely regular space. Let $G$ and $H$ be non-vanishing self-adjoint closed subalgebras of $C_B(X)$ which have local units. Then $G=H$ if $‎\lambda_GX=‎\lambda_HX$.
\end{lemma}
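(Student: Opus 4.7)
The plan is to leverage Theorem \ref{FS} directly: it already tells us that both $G$ and $H$ embed, via restriction to their respective $\lambda$-sets, as the \emph{entire} algebra $C_0$ of that set. Once we observe that the two $\lambda$-sets coincide, the two images inside $C_0(\lambda_H X)$ coincide, and density of $X$ in the common $\lambda$-set lets us pull the equality back to $C_B(X)$.

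More concretely, write $Y = \lambda_G X = \lambda_H X$, and let
\[\psi_G\colon G\longrightarrow C_0(Y),\quad \psi_G(f)=f_\beta\!\mid_Y,\qquad \psi_H\colon H\longrightarrow C_0(Y),\quad \psi_H(f)=f_\beta\!\mid_Y\]
be the maps from Theorem \ref{FS}. By that theorem each of $\psi_G$ and $\psi_H$ is an isometric isomorphism \emph{onto} $C_0(Y)$, so in particular $\psi_G(G)=\psi_H(H)$.

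Now take any $f\in G$. Since $\psi_G(f)\in\psi_G(G)=\psi_H(H)$, there exists $g\in H$ with $\psi_H(g)=\psi_G(f)$, i.e.\ $g_\beta\!\mid_Y = f_\beta\!\mid_Y$. By Lemma \ref{TFRS} we have $X\subseteq Y$, so restricting this identity further down to $X$ and using that $f_\beta$ and $g_\beta$ extend $f$ and $g$ respectively yields $f=g$ in $C_B(X)$. Hence $f\in H$, proving $G\subseteq H$; the reverse inclusion is identical by symmetry.

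There is no real obstacle here beyond keeping the two restriction maps straight: the work has already been done in Theorem \ref{FS} (surjectivity of $\psi_H$) and in Lemma \ref{TFRS} (which guarantees $X\subseteq Y$, so that equality on $Y$ forces equality on $X$, where the original elements of $G$ and $H$ actually live).
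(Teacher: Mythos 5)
Your proposal is correct and is essentially the paper's own argument: the paper composes $\psi^{-1}\phi$ (where $\phi$, $\psi$ are the two restriction isomorphisms onto $C_0(\lambda_GX)=C_0(\lambda_HX)$) and observes it is the identity on $G$, which is exactly your element-by-element version using surjectivity of $\psi_H$ and restriction back to the dense subspace $X$. No substantive difference.
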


\begin{proof}
Suppose that $‎\lambda_GX=‎\lambda_HX$. Let
\[\phi:G\longrightarrow C_0(\lambda_GX)\quad\text{and}\quad\psi:H\longrightarrow C_0(\lambda_HX)\]
where $\phi(g)=g_G$ and $\psi(h)=h_H$ for any $g$ in $G$ and $h$ in $H$. Then $\phi$ and $\psi$ are isomorphisms by (the proof of) Theorem \ref{FS}, where
\[\phi^{-1}(g)=g\mid_X\quad\text{and}\quad\psi^{-1}(h)=h\mid_X\]
for any $g$ in $C_0(\lambda_GX)$ and $h$ in $C_0(\lambda_HX)$. The mapping
\[\psi^{-1}\phi:G\longrightarrow C_0(\lambda_GX)=C_0(\lambda_HX)\longrightarrow H\]
is such that
\[\psi^{-1}\phi(g)=\psi^{-1}(g_G)=g_G\mid_X=g\]
for any $g$ in $G$. Thus, in particular $G\subseteq H$. Similarly $H\subseteq G$. Therefore $G=H$.
\end{proof}

Let $\mathscr{P}$ be either strong ‎zero-dimensionality, basic disconnectedness, extreme disconnectedness, or being an $F$-space. Let $X$ be a completely regular space. It is known that $X$ has $\mathscr{P}$ if and only if $\beta X$ has $\mathscr{P}$. (See Theorem 6.2.12 of \cite{E} for strong ‎zero-dimensionality, Problem 6M.1 of \cite{GJ} for basic disconnectedness and extreme disconnectedness and Theorem 14.25 of \cite{GJ} for being an $F$-space.)

\begin{theorem}\label{PFS}
Let $X$ be a completely regular space. Let $H$ be a non-vanishing self-adjoint closed subalgebra of $C_B(X)$ which has local units. Let $\mathscr{P}$ be either strong ‎zero-dimensionality, basic disconnectedness, extreme disconnectedness, or being an $F$-space. The following are equivalent:
\begin{itemize}
\item[(1)] ‎‎‎‎‎$\mathfrak{sp}(H)‎$ has $\mathscr{P}$.
\item[(2)] $‎X‎$ has $\mathscr{P}$.
\end{itemize}
\end{theorem}

\begin{proof}
Note that $\lambda_HX$ has $\mathscr{P}$ if and only if $\beta(\lambda_HX)‎$ has $\mathscr{P}$. But $\beta(\lambda_HX)‎=\beta X$, as $X‎\subseteq‎\lambda_HX‎‎\subseteq‎\beta ‎X$ by Lemma \ref{TFRS}. Therefore $\mathfrak{sp}(H)‎‎$ ($=\lambda_HX$ by Theorem \ref{JUFG}) has $\mathscr{P}$ if and only if $\beta ‎X$ has $\mathscr{P}$ if and only if $‎X$ has $\mathscr{P}$.
\end{proof}

\begin{theorem}\label{POFS}
Let $X$ be a completely regular space. Let $H$ be a non-vanishing self-adjoint closed subalgebra of $C_B(X)$ which has local units. Let ‎‎‎‎‎$\mathfrak{sp}(H)‎$ be either strongly ‎zero-dimensional, basically disconnected, or extremally disconnected. Then
\[H=\overline{\langle\, f\in H:\mbox{$f$ is an idempotent}\,\rangle}.\]
Here the bar denotes the closure in $C_B(X)$.
\end{theorem}

\begin{proof}
Note that $\lambda_HX$ ($=\mathfrak{sp}(H)‎$ by Theorem \ref{JUFG}) is strongly ‎zero-dimensional, as basic disconnectedness and extreme disconnectedness are stronger than strong ‎zero-dimensionality. Let $G=\overline{I}$ where $I$ is the subalgebra of $H$ generated by the set of all its idempotents. We verify that $G$ is a non-vanishing self-adjoint closed subalgebra of $C_B(X)$ which has local units such that ‎$\lambda_GX=‎\lambda_HX$. Corollary \ref{PFD} will then imply that $G=H$.

To show that $G$ is non-vanishing, let $x$ be in $X$. Then $x$ is in $\lambda_HX$. Since $\beta X$ is  zero-dimensional, there is an open and closed neighborhood $U$ of $x$ in $\beta X$ such that $U\subseteq\lambda_HX$. By Lemma \ref{ID} then $U\subseteq\mathrm{cl}_{\beta X}\mathrm{coz}(h)$ for some $h$ in $H$. In particular $X\cap U\subseteq\mathrm{cl}_X\mathrm{coz}(h)=\mathrm{supp}(h)$. Note that $X\cap U$ is open and closed in $X$. Therefore, by our assumption, there is an $f$ in $H$ such that $f\mid_{X\cap U}=\mathbf{1}$ and $f\mid_{X\setminus U}=\mathbf{0}$. Observe that $f$ is in $G$, as it is idempotent, and $f(x)\neq 0$.

We now show that $G$ is self-adjoint. Note that elements of $I$ are sums of the form
\[\sum_{k=1}^n\alpha_kf^k_1\cdots f^k_{j_k}\]
where $\alpha_j$'s are scalars and $f^i_j$'s are idempotent elements of $H$. It is now clear that for any elements $f$ in $I$ its conjugate $\overline{f}$ is also in $I$. Let $g$ be in $G$. Then $f_n\rightarrow g$ for some sequence $f_1,f_2,\ldots$ in $I$. But $\overline{f_n}\rightarrow\overline{g}$ and $\overline{f_1},\overline{f_2},\ldots$ are in $I$. Therefore $\overline{g}$ is in $G$.

Next, we show that $G$ has local units. Suppose that $A$ is a closed subspaces of $X$ and $U$ is a neighborhood of $A$ in $X$ which is contained in $\mathrm{supp}(g)$ for some $g$ in $G$. Note that $g$ is also in $H$. Thus, by our assumption, there is some $f$ in $H$ such that $f\mid_A=\mathbf{1}$ and $f\mid_{X\setminus U}=\mathbf{0}$. Now, since $X$ is   strongly zero-dimensional, there is an open and closed subspace $V$ of $X$ such that $A\subseteq V\subseteq U$. Again, $V$ is a closed subspace of $X$ which is also a neighborhood of itself in $X$ and is contained in $\mathrm{supp}(g)$. Therefore, by our assumption, there is some $h$ in $H$ such that $h\mid_V=\mathbf{1}$ and $h\mid_{X\setminus V}=\mathbf{0}$. Observe that $h$ is an idempotent in $H$ and is therefore in $G$. Also, $h\mid_A=\mathbf{1}$ and $h\mid_{X\setminus U}=\mathbf{0}$, as $A\subseteq V$ and $X\setminus U\subseteq X\setminus V$.

Finally, we show that ‎$\lambda_GX=‎\lambda_HX$. It is clear that $\lambda_GX\subseteq‎\lambda_HX$, as $G\subseteq‎ H$. To check the reverse inclusion, let $t$ be in $‎\lambda_HX$. By an argument similar to the one we used to check that $G$ is non-vanishing we can find an open and closed neighborhood $U$ of $t$ in $\beta X$ such that $f=\chi_{(X\cap U)}$ is in $H$. Observe that $\mathrm{cl}_{\beta X}(X\cap U)=\mathrm{cl}_{\beta X}U=U$ is open and closed in $\beta X$ and
\[U=\mathrm{int}_{\beta X}\mathrm{cl}_{\beta X}U=\mathrm{int}_{\beta X}\mathrm{cl}_{\beta X}(X\cap U)=\mathrm{int}_{\beta X}\mathrm{cl}_{\beta X}\mathrm{coz}(f)\subseteq\lambda_GX.\]
Thus $t$ is in $\lambda_GX$. Therefore $\lambda_HX\subseteq‎\lambda_GX$.
\end{proof}

It is known that in the class of locally compact paracompact spaces, total disconnectedness, zero-dimensionality and strong zero-dimensionality are all equivalent. (See Theorem 6.2.10 of \cite{E}.)

\begin{theorem}\label{PO}
Let $X$ be a completely regular space. Let $H$ be a non-vanishing self-adjoint closed subalgebra of $C_B(X)$ which has local units. Let ‎‎‎‎‎$\mathfrak{sp}(H)‎$ be paracompact $($in particular, metrizable or Lindel\"{o}f\,$)$. The following are equivalent:
\begin{itemize}
\item[(1)] ‎‎‎‎‎$\mathfrak{sp}(H)‎$ is totally disconnected.
\item[(2)] $\mathfrak{sp}(H)‎$ is zero-dimensional.
\item[(3)] $\mathfrak{sp}(H)‎$ is strongly zero-dimensional.
\item[(4)] $X‎$ is strongly zero-dimensional.
\end{itemize}
\end{theorem}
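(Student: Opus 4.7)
The plan is to assemble the theorem from two already-established ingredients; no new argument is really needed beyond identifying $\mathfrak{sp}(H)$ with $\lambda_HX$ and invoking the right prior results.

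First I would recall, by Theorem \ref{JUFG}, that $\mathfrak{sp}(H)=\lambda_HX$, which by construction is an open subspace of $\beta X$ and is therefore locally compact. Combined with the standing hypothesis that $\mathfrak{sp}(H)$ is paracompact, this places $\mathfrak{sp}(H)$ in the class of \emph{locally compact paracompact} spaces. Within that class, total disconnectedness, zero-dimensionality, and strong zero-dimensionality coincide (Theorem 6.2.10 of \cite{E}, quoted in the paragraph preceding the statement). This immediately gives $(1)\Leftrightarrow(2)\Leftrightarrow(3)$.

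For the remaining equivalence $(3)\Leftrightarrow(4)$, I would simply apply Theorem \ref{PFS} with $\mathscr{P}$ taken to be strong zero-dimensionality, which asserts exactly that $\mathfrak{sp}(H)$ is strongly zero-dimensional if and only if $X$ is. Chaining these two steps closes the cycle of implications.

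I do not expect any genuine obstacle here: the content has been isolated in Theorems \ref{JUFG} and \ref{PFS} together with the Engelking result, and the role of the paracompactness hypothesis is solely to activate Theorem 6.2.10 of \cite{E}; the parenthetical cases (metrizable, Lindel\"of) are absorbed because every metrizable space and every regular Lindel\"of space is paracompact (Theorems 5.1.1--5.1.3 of \cite{E}, already cited in the excerpt). The only micro-point worth flagging in the write-up is that no paracompactness assumption is needed for $(3)\Leftrightarrow(4)$ — it holds in full generality from Theorem \ref{PFS} — so the hypothesis is used only to bridge the three disconnectedness conditions on $\mathfrak{sp}(H)$ itself.
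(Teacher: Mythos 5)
Your proposal is correct and is essentially identical to the paper's own proof: both identify $\mathfrak{sp}(H)=\lambda_HX$ as a locally compact space, invoke Theorem 6.2.10 of \cite{E} for locally compact paracompact spaces to get $(1)\Leftrightarrow(2)\Leftrightarrow(3)$, and apply Theorem \ref{PFS} for $(3)\Leftrightarrow(4)$. Your observation that paracompactness is not needed for $(3)\Leftrightarrow(4)$ is accurate and consistent with the paper.
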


\begin{proof}
Note that ‎‎‎‎‎$\mathfrak{sp}(H)$ ($=‎\lambda_HX$ by Theorem \ref{JUFG}) is locally compact (by its definition). Thus (1)--(3) are equivalent if $\mathfrak{sp}(H)‎$ is also paracompact. The equivalence of (3) and (4) follows from Theorem \ref{PFS}.
\end{proof}

\section*{Acknowledgements}

The authors also thank the anonymous reviewer for careful reading of the manuscript, useful comments, and prompt report.


\begin{thebibliography}{10}

\bibitem{AG} M. Alaghmandan and M. Ghasemi, Seminormed $*$-subalgebra of $\ell^\infty(X)$. \textit{Manuscript.} (2015), 16 pp. \verb" arXiv:1510.00846v1 [math.FA]"

\bibitem{AAN} A.R. Aliabad, F. Azarpanah and M. Namdari, Rings of continuous functions vanishing at inﬁnity. \textit{Comment. Math. Univ. Carolin.} \textbf{45} (2004), no. 3, 519--533.

\bibitem{B} B. Banaschewski, Local connectedness of extension spaces. \textit{Canad. J. Math.} \textbf{8} (1956), 395-–398.

\bibitem{Be} E. Behrends, \textit{$M$-structure and the Banach--Stone Theorem}. Springer, Berlin, 1979.

\bibitem{7} J.B. Conway, \textit{A Course in Functional Analysis}. Second edition. Springer--Verlag, New York, 1990.

\bibitem{E} R. Engelking, \textit{General Topology}. Second edition. Heldermann Verlag, Berlin, 1989.

\bibitem{GJ} L. Gillman and M. Jerison, \textit{Rings of Continuous Functions}. Springer--Verlag, New York--Heidelberg, 1976.

\bibitem{HI} M. Henriksen and J.R. Isbell, Local connectedness in the Stone--\v{C}ech compactification. \textit{Illinois J. Math.} \textbf{1} (1957), no. 4, 574-–582.

\bibitem{Kou1} M.R. Koushesh, Compactification-like extensions. \textit{Dissertationes Math. (Rozprawy Mat.)} \textbf{476} (2011), 88 pp.

\bibitem{Kou2} M.R. Koushesh, The Banach algebra of continuous bounded functions with separable support. \textit{Studia Math.} \textbf{210} (2012), no. 3, 227--237.

\bibitem{Kou3} M.R. Koushesh, Representation theorems for Banach algebras. \textit{Topology Appl.} \textbf{160} (2013), no. 13, 1781--1793.

\bibitem{Kou4} M.R. Koushesh, Representation theorems for normed algebras. \textit{J. Aust. Math. Soc.} \textbf{95} (2013), no. 2, 201--222.

\bibitem{Kou5} M.R. Koushesh, Topological extensions with compact remainder. \textit{J. Math. Soc. Japan} \textbf{67} (2015), no. 1, 1--42.

\bibitem{Kou6} M.R. Koushesh, Ideals in $C_B(X)$ arising from ideals in $X$. \textit{Manuscript} (2016), 61 pp. \verb"arXiv:1508.07734v2 [math.FA]"

\end{thebibliography}
\end{document}